\documentclass[10pt, a4paper]{article}
\usepackage{latexsym,amsmath,enumerate,amssymb,amsbsy,amsthm,lscape, hyperref} 
\usepackage {graphicx}
\usepackage{float,color} 
\usepackage{slashbox}

\theoremstyle{plain}
\newtheorem{theorem}{Theorem}[section]
\newtheorem{proposition}[theorem]{Proposition}
\newtheorem{lemma}[theorem]{Lemma}
\newtheorem{corollary}[theorem]{Corollary}

\theoremstyle{definition}

\theoremstyle{remark}
\newtheorem{remark}[theorem]{Remark}
\newtheorem{example}[theorem]{Example}

\setlength{\topmargin}{-1.5cm}
\setlength{\oddsidemargin}{0.2cm}
\setlength{\evensidemargin}{0.2cm}
\setlength{\textwidth}{16.5cm}%
\setlength{\textheight}{25cm}

\numberwithin{equation}{section}


\begin{document}
	\title{On the Enumeration of  Symmetric Tridiagonal Matrices \\ with prescribed Determinant over Commutative \\  Finite Chain Rings}
	\author{Edgar Martinez-Moro\thanks{E. Martinez-Moro is with the Institute of Mathematics, University of Valladolid, Castilla, Spain. Email: edgar.martinez@uva.es}, Neennara Rodnit\thanks{N. Rodnit is with the 
Department of Mathematics, Faculty of Science, Silpakorn University,
Nakhon Pathom 73000, Thailand.
Email: neennararodnit@gmail.com}
,    and Somphong Jitman\thanks{S.~Jitman  is with the 
Department of Mathematics, Faculty of Science, Silpakorn University,
Nakhon Pathom 73000, Thailand. Email: sjitman@gmail.com}$~^,$\thanks{Corresponding Author}}
	\maketitle

	\begin{abstract}
Determinants of structured matrices play a fundamental role in both pure and applied mathematics, with wide-ranging applications in linear algebra, combinatorics, coding theory, and numerical analysis. In this work, the enumeration of symmetric tridiagonal matrices with prescribed determinant over finite fields and over commutative finite chain rings is developed. Using the   recurrent formula for determinants, a recursive form of  the numbers of singular and nonsingular symmetric tridiagonal matrices is derived, after which a uniform counting framework for any fixed determinant value is obtained. Over finite fields, quadratic-character techniques are employed. In odd dimensions, the enumeration is shown to be independent of the chosen nonzero determinant.  Whereas, in even dimensions, it depends only on the quadratic residue/nonresidue class in the fields. For commutative finite chain rings, explicit formulas for nonsingular  case are produced by lifting along the ideal chain and analyzing reduction to the residue field, yielding closed expressions in terms of the nilpotency index and the size of the residue fields.  A layered enumeration has been developed for the study of  symmetric tridiagonal matrices over commutative finite chain rings by stratifying determinants into ideal and punctured layers. Entry-wise reduction to quotient rings expresses each layer through zero-determinant counts on quotients, yielding formulas for prescribed determinants, including quadratic/non-quadratic residue    factors, and a decomposition for the singular ones.

\noindent{\bf Keywords:} symmetric tridiagonal matrices, determinant enumeration, finite fields, finite chain rings, quadratic character sums

\noindent{\bf MSC 2020:} 15B36, 15A15, 11T24

	\end{abstract}

	\section{Introduction}

	Determinants of square matrices are scalar invariants that capture fundamental properties of linear transformations and systems of linear equations. They play a pivotal role in both pure and applied mathematics, with applications ranging across linear algebra, geometry, engineering, computer science, and physics. Consequently, the study of determinants has attracted substantial attention and remains an active area of research (see, \cite{CK2010}, \cite{ST2020}, and references therein).  
	Beyond their theoretical significance, enumeration problems involving determinants present additional mathematical challenges. In \cite{MM1984}, explicit formulas for the number of singular and nonsingular $ n \times n $ matrices over a finite field $ \mathbb{F}_q $ were established. For the ring $ \mathbb{Z}_m $ of integers modulo $ m $, the enumeration of $ n \times n $ matrices with prescribed determinant was initiated in \cite{BM1987} as a natural generalization of the finite field case $ \mathbb{Z}_p $. Subsequently, \cite{LW2007} introduced an alternative and simpler approach to the same problem, and this method was further extended in \cite{CJU2017} to matrices over {commutative finite chain rings (CFCRs)} and {principal ideal rings}, thereby completely determining the number of matrices with a fixed determinant in these settings. More recently, \cite{J2020} considered diagonal matrices over CFCRs with prescribed determinants, with applications to the enumeration of certain circulant matrices, while \cite{JS2024} investigated tridiagonal matrices over CFCRs under the same constraint.

  {Symmetric tridiagonal matrices} constitute a fundamental class of matrices because of their structural simplicity, numerical stability, and computational efficiency (see, e.g., \cite{DP2004}, \cite{CR2013}, \cite{HO1996}, \cite{M1992}). Given their importance, the study of symmetric tridiagonal matrices from both algebraic and enumerative viewpoints is of independent interest.
In this work, we focus on the enumeration of symmetric tridiagonal matrices with prescribed determinants over finite fields and finite chain rings.
 For completeness, we first recall the basic definitions and notations.  
 
	Let $ R $ be a commutative ring with identity and denote by $ U(R) $ its group of units. An element $a \in U(R)$ is called a {\em quadratic residue} if there exists an element $x\in R$ such that  $a=x^2$, and it is said to be  a {\em quadratic nonresidue} otherwise. 
	Let $Q(R)=\{a\in  U(R)  \, :\, \exists x \in R  \text{ such that } a=x^2 \} $ be the set of quadratic residues in   $R$.  Let  $N(R) = (U(R)  )  \setminus  Q(R)$  denote the set of quadratic nonresidues in $R$. A  commutative finite   ring with identity $1 \neq 0$ is referred to as a 
	\textit{ commutative finite   chain ring} (CFCR) if its set of ideals is totally ordered under inclusion; that is, every pair of ideals in $R$ is comparable with respect to set containment. 
	It is well known from \cite{H2001} that every CFCR is, in particular, a principal ideal ring possessing a unique maximal ideal.

	For a positive integer $ n $, an $ n \times n $ matrix over $ R $ is called a \emph{diagonal matrix} if all entries outside the main diagonal are zero. We write
	\[
	{\rm Diag}\begin{pmatrix}
		x_1 & x_{2} & \dots  & x_n
	\end{pmatrix}
	\]
	for the diagonal matrix with diagonal entries $ x_1,x_2,\dots,x_n \in R $.  
	
	A matrix is called \emph{tridiagonal} if its nonzero entries are confined to the main diagonal, the subdiagonal, and the superdiagonal. Explicitly, a tridiagonal matrix over $ R $ has the form
	\begin{align} \label{def:td}
		\left[\begin{array}{ccccccc}
			x_{1} & r_{1}  &        &        &        &        &        \\
			t_{1} & x_{2}  & r_{2}  &        &        &        &        \\
			& t_{2}  & x_{3}  & r_{3}  &        &        &        \\
			&        & \ddots & \ddots & \ddots &        &        \\
			&        &        & t_{n-2}& x_{n-1}& r_{n-1}&        \\
			&        &        &        & t_{n-1}& x_{n}  &        
		\end{array}\right],
	\end{align}
	where $ x_i, r_j, t_j \in R $. A tridiagonal matrix $ A $ is \emph{symmetric} if $ A = A^T $, or equivalently, $ r_i = t_i $ for all $ i=1,\dots,n-1 $.  
	For convenience, we use the notation
	\[
	{\rm TDiag}\begin{pmatrix}
		& r_1 & \dots & r_{n-2} & & r_{n-1} & \\
		x_1 &    & \dots &         & & x_{n-1} & & x_n \\
		& t_1 & \dots & t_{n-2} & & t_{n-1} &
	\end{pmatrix}
	\]
	for a general tridiagonal matrix, and  
	\[
	{\rm STDiag}\begin{pmatrix}
		& r_1 & \dots & r_{n-2} & & r_{n-1} & \\
		x_1 &    & \dots &         & & x_{n-1} & & x_n
	\end{pmatrix}
	\]
	if the matrix is symmetric.

For $n\geq 1$, let    $A_n= 	{\rm STDiag}\begin{pmatrix}
	& r_1 & \dots & r_{n-2} & & r_{n-1} & \\
	x_1 &    & \dots &         & & x_{n-1} & & x_n
\end{pmatrix}$.  	It is well known that, the determinant of an $ n \times n $ symmetric tridiagonal matrix   in 
	$ ST_n(\mathbb{F}_q) $ satisfies the  {recurrent formula}
	\begin{align} \label{eq:det}
		\det ( A_n)  = x_n \det( A_{n-1}) - r_{n-1} ^2 \det(	 A_{n-2})
	\end{align}
for all  $n\geq 3$,  where 	 $	\det ( A_1) =x_1$   and  $	\det ( A_2) =x_1x_2-r_2^2$.  
	
	By a slight abuse of notation, we adopt the convention
$
D_1(R) = ST_1(R) \cong R
$ and $ 
ID_1(R) = IST_1(R) \cong U(R)
$. For $ n \ge 2 $, let $ D_n(R) $  and $ ST_n(R) $ denote the sets of $ n\times n $ diagonal and symmetric tridiagonal matrices over $ R $, respectively. We further write
	\[
	ID_n(R) = \{ A \in D_n(R) \, :\,  \det(A) \in U(R)\} \text{ ~ and ~}
	IST_n(R) = \{ A \in ST_n(R)  \, :\, \det(A) \in U(R)\}.
	\]

	In the case where the ring  $R$ is finite,  we have the following results. 
	\begin{proposition} \label{prop:TnR} Let $n$ be a positive integer and let $ R$ be a  commutative finite    ring with identity.  Then 
		$|D_n(R)|=|R|^n$  and  $|ST_n(R)|=|R|^{2n-1}$.
		
	\end{proposition}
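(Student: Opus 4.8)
The plan is to prove both counts by exhibiting explicit bijections with Cartesian powers of $R$. For the diagonal case, a matrix in $D_n(R)$ is completely determined by its main-diagonal entries $x_1,\dots,x_n$, so the assignment sending such a matrix to the tuple $(x_1,\dots,x_n)$ defines a bijection from $D_n(R)$ onto $R^n$. Since each $x_i$ ranges freely over $R$, this immediately yields $|D_n(R)|=|R|^n$. The case $n=1$ agrees with the stated convention $D_1(R)\cong R$.

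For the symmetric tridiagonal case, the key point is that the symmetry condition $A=A^T$ forces $t_i=r_i$ for every $i$, so the subdiagonal is not an independent collection of parameters but is instead determined by the superdiagonal. Consequently a matrix in $ST_n(R)$ is specified exactly by the $n$ diagonal entries $x_1,\dots,x_n$ together with the $n-1$ superdiagonal entries $r_1,\dots,r_{n-1}$. I would make this precise by defining the map sending such a matrix to the tuple $(x_1,\dots,x_n,r_1,\dots,r_{n-1})\in R^{2n-1}$ and verifying that it is a bijection. Counting the free parameters gives $n+(n-1)=2n-1$, hence $|ST_n(R)|=|R|^{2n-1}$, which again recovers the convention $ST_1(R)\cong R$ when $n=1$.

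There is no serious technical obstacle here, as both arguments reduce to a careful parameter count. The only point deserving mild attention is to avoid double-counting the off-diagonal band in the symmetric case: one must record the superdiagonal exactly once and invoke symmetry to recover the subdiagonal, rather than treating $r_i$ and $t_i$ as separate degrees of freedom. Finiteness of $R$ is used solely to convert the bijections into the stated equalities of cardinalities.
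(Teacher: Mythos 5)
Your argument is correct: both counts follow from the parameter bijections you describe, with the symmetry condition $t_i=r_i$ ensuring the off-diagonal band contributes only $n-1$ free entries. The paper states this proposition without proof, treating it as an immediate counting fact, and your write-up supplies exactly the reasoning it implicitly relies on.
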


	For a positive integer $n$ and $a\in R$, let \[ST_{n}(R,a)=\{A\in ST_{n}(R)\, :\,  \det ( A)  =a\}.\] 
	
	\begin{example}
		
		Let  
		\[
		A =  {\rm STDiag}\begin{pmatrix}
			&0&&3&&4&&1&\\
			1&&2&&2&&3&&4
		\end{pmatrix} 
		\in ST_5(\mathbb{F}_5).\] Since $\det(A)= 2$, it follows that   $A\in ST_{5}(\mathbb{F}_5,2)$.  
	\end{example}

	\begin{example} For the finite field $\mathbb{F}_5$, the values $|ST_n(\mathbb{F}_5,a)| $  are presented    in Table \ref{T1} for all $a\in \mathbb{F}_5$  and $n\in \{1,2,3,4,5,6,7,8,9,10\}$.

	\begin{table}[!hbt]
		\centering  
\scriptsize
        \begin{tabular}{|c|r|r|r|r|r|r|}
\hline
$n$ & $a=0$ & $a=1$ & $a=2$ & $a=3$ & $a=4$ & $ |IST_n(\mathbb{F}_5)|$ \\
\hline
1  & 1 & 1 & 1 & 1 & 1 & 4 \\
2  & 25 & 30 & 20 & 20 & 30 & 100 \\
3  & 725 & 600 & 600 & 600 & 600 & 2,400 \\
4  & 20,125 & 15,000 & 14,000 & 14,000 & 15,000 & 58,000 \\
5  & 553,125 & 350,000 & 350,000 & 350,000 & 350,000 & 1,400,000 \\
6  & 15,028,125 & 8,500,000 & 8,400,000 & 8,400,000 & 8,500,000 & 33,800,000 \\
7  & 404,703,125 & 204,000,000 & 204,000,000 & 204,000,000 & 204,000,000 & 816,000,000 \\
8  & 11,487,578,125 & 4,762,500,000 & 4,752,500,000 & 4,752,500,000 & 4,762,500,000 & 19,030,000,000 \\
9  & 306,939,453,125 & 114,000,000,000 & 114,000,000,000 & 114,000,000,000 & 114,000,000,000 & 456,000,000,000 \\
10 & 8,148,486,328,125 & 2,731,750,000,000 & 2,730,750,000,000 & 2,730,750,000,000 & 2,731,750,000,000 & 10,925,000,000,000 \\
\hline
\end{tabular}
        \caption{$|ST_n(\mathbb{F}_5,a)|$ for $a\in \mathbb{F}_5$ and $n\in\{1,2,3,4,5,6,7,8,9,10\}$}
		\label{T1}
	\end{table}
 
\end{example}

	From Table \ref{T1}, it can be clearly observed that in the case where $n$ is odd, the cardinality $|ST_n(\mathbb{F}_5,a)|$ remains constant across all units in $\mathbb{F}_5$.  In contrast, when $n$ is even, the value of 
	$|ST_n(\mathbb{F}_5,a)|$  takes the same value for all quadratic residues in 
	$\mathbb{F}_5$ and, separately, the same value for all  quadratic nonresidues in 
	$\mathbb{F}_5$.
	We aim to investigate these observations in detail for arbitrary cases, providing explicit statements and rigorous proofs.

	The primary objectives of this paper are threefold. First, we aim to present a detailed enumeration of the sets $ST_n(R)$ and $IST_n(R)$ in the specific cases where $R$ is taken to be either a finite field or a finite chain ring, thereby providing explicit counting results within these important algebraic structures. Second, we seek to investigate and establish a precise relationship among the sets $ST_n(R,a)$ that arises from and depends essentially on the algebraic properties of the element $a$. Finally, we endeavor to derive and present an explicit formula for $|ST_n(R,a)|$ in a variety of significant cases.

The remainder of this paper is organized as follows. 
Section~\ref{sec3} presents the enumeration of symmetric tridiagonal matrices with prescribed determinants over finite fields. The main results in this section are   explicit counting formulas,   recurrence relations for character sums, and   closed-form expressions in both odd and even dimensional cases.  
Section~\ref{sec4} extends the analysis to  commutative finite   chain rings, which  treats separately the cases of unit and non-unit determinants and obtains enumeration formulas that generalize the finite field results.  
Finally, Section~\ref{sec5} summarizes the main contributions and discusses possible directions for future research.

	\section{Enumeration of Symmetric Tridiagonal Matrices with Prescribed Determinant over Finite Fields}
	
	\label{sec3}
	
	This section focuses on the enumeration of symmetric tridiagonal matrices over finite fields with a fixed determinant. First, the   number of $n\times n$ nonsingular  symmetric tridiagonal matrices  and the   number of $n\times n$   singular  symmetric tridiagonal matrices    over finite fields are determined.  Our approach builds upon recursive formulas, structural characterizations, and properties of determinants. Later, a relation among  $|ST_{n}(\mathbb{F}_q,a)|  $  is derived where $a\in U(\mathbb{F}_q)$  together with  an explicit enumeration formula for  the number of $n\times n$ symmetric tridiagonal matrices over $\mathbb{F}_q $ of determinant $a$.

	\subsection{Enumeration of Singular and nonsingular Symmetric Tridiagonal Matrices over Finite Fields} \label{Sec:3.1}

	In this section,   the enumeration of  singular and nonsingular symmetric tridiagonal matrices over finite fields   is presented using a recursive process.
	
	Let $q$ be a prime power and let $n$ be a positive integer.  
	From Proposition~\ref{prop:TnR}, it immediately follows that 
	\[
	|ST_{n}(\mathbb{F}_q)|  = q^{2n-1}.
	\]
	We note that 
	$
	IST_{n}(\mathbb{F}_q) = \{ A \in ST_{n}(\mathbb{F}_q) \, :\, \det(A) \neq 0 \}
	$ is
	the set of all nonsingular $n \times n$ symmetric tridiagonal matrices over the finite field $\mathbb{F}_q$, while the complement $ST_n(\mathbb{F}_q) \setminus IST_n(\mathbb{F}_q)$ is  the set of singular symmetric tridiagonal matrices over $\mathbb{F}_q$.   A recursive formula for $|IST_n(\mathbb{F}_q)|$ is presented in the following theorem.

	\begin{theorem}\label{theIT}
		Let $q$ be a prime power. Then   $|IST_{1}(\mathbb{F}_q)|=q-1$, $|IST_{2}(\mathbb{F}_q)|=q^2(q-1)$ and
		$$|IST_n(\mathbb{F}_q)|=q(q-1)|IST_{n-1}(\mathbb{F}_q)|+q^2(q-1)|IST_{n-2}(\mathbb{F}_q)|$$
		for all $n \ge 3$.
	\end{theorem}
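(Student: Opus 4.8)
The plan is to build a recursion by regarding an $n\times n$ symmetric tridiagonal matrix $A_n$ as an extension of its leading principal $(n-1)\times(n-1)$ submatrix $A_{n-1}$: the remaining data are exactly the two new free entries $x_n$ and $r_{n-1}$. Since $A_{n-1}$ already contains its own leading principal submatrix $A_{n-2}$, fixing $A_{n-1}$ fixes both $d_{n-1}:=\det(A_{n-1})$ and $d_{n-2}:=\det(A_{n-2})$, and by the recurrent formula~\eqref{eq:det} the determinant of the extension is the affine expression
\[
\det(A_n)=x_n\,d_{n-1}-r_{n-1}^2\,d_{n-2}.
\]
The strategy is to count, for each fixed $A_{n-1}$, how many of the $q^2$ pairs $(x_n,r_{n-1})$ keep $A_n$ nonsingular, and then to sum over $A_{n-1}$ according to the vanishing pattern of the pair $(d_{n-1},d_{n-2})$.

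First I would verify the base cases $n=1,2$ by a direct count using $\det(A_1)=x_1$ and $\det(A_2)=x_1x_2-r_1^2$, which give $|IST_1(\mathbb{F}_q)|=q-1$ and (by counting the $q^2$ singular triples $(x_1,x_2,r_1)$) $|IST_2(\mathbb{F}_q)|=q^3-q^2=q^2(q-1)$. For $n\ge 3$ I would then partition the matrices $A_{n-1}$ into three classes according to the values of $d_{n-1}$ and $d_{n-2}$: class $(\mathrm{A})$ with $d_{n-1}\neq 0$; class $(\mathrm{B})$ with $d_{n-1}=0$ but $d_{n-2}\neq 0$; and class $(\mathrm{C})$ with $d_{n-1}=d_{n-2}=0$. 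In class $(\mathrm{A})$, for each of the $q$ choices of $r_{n-1}$ the map $x_n\mapsto \det(A_n)$ is a bijection of $\mathbb{F}_q$, so precisely $q-1$ values of $x_n$ avoid $0$, yielding $q(q-1)$ nonsingular extensions. In class $(\mathrm{B})$ the determinant reduces to $-r_{n-1}^2 d_{n-2}$, which vanishes exactly when $r_{n-1}=0$; hence there are $q-1$ admissible $r_{n-1}$ with $x_n$ free, again giving $q(q-1)$ extensions. In class $(\mathrm{C})$ the determinant is identically zero, contributing nothing.

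It remains to count the sizes of classes $(\mathrm{A})$ and $(\mathrm{B})$, and this is the step that I expect to require the most care. Class $(\mathrm{A})$ is simply $IST_{n-1}(\mathbb{F}_q)$, of cardinality $|IST_{n-1}(\mathbb{F}_q)|$. For class $(\mathrm{B})$ I would reapply the same extension argument one level lower: such an $A_{n-1}$ arises from a nonsingular $A_{n-2}$ (there are $|IST_{n-2}(\mathbb{F}_q)|$ of these) by choosing $(x_{n-1},r_{n-2})$ so that $\det(A_{n-1})=x_{n-1}d_{n-2}-r_{n-2}^2 d_{n-3}=0$; since $d_{n-2}\neq 0$, each of the $q$ values of $r_{n-2}$ forces a unique $x_{n-1}$, so class $(\mathrm{B})$ has exactly $q\,|IST_{n-2}(\mathbb{F}_q)|$ elements. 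Here one must also fix the convention $\det(A_0)=1$ so that the argument is valid in the boundary case $n=3$ (where $d_{n-3}=d_0$). The delicate point throughout is recognizing that the extension count depends only on this coarse vanishing pattern of $(d_{n-1},d_{n-2})$, which is what makes the recursion close.

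Combining the three classes gives
\[
|IST_n(\mathbb{F}_q)|=q(q-1)\bigl(|IST_{n-1}(\mathbb{F}_q)|+q\,|IST_{n-2}(\mathbb{F}_q)|\bigr)
=q(q-1)|IST_{n-1}(\mathbb{F}_q)|+q^2(q-1)|IST_{n-2}(\mathbb{F}_q)|,
\]
which is precisely the asserted recurrence for all $n\ge 3$.
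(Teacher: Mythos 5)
Your proof is correct, but it takes a genuinely different route from the paper's. The paper partitions the matrices according to whether the last diagonal entry $x_n$ is zero: when $x_n\neq 0$ it applies a unimodular congruence that folds $r_{n-1}$ into the $(n-1,n-1)$ entry and then uses a shift bijection onto $IST_{n-1}(\mathbb{F}_q)$, giving $q(q-1)|IST_{n-1}(\mathbb{F}_q)|$; when $x_n=0$ the determinant collapses to $-r_{n-1}^2\det(A_{n-2})$ with $x_{n-1}$ and $r_{n-2}$ completely free, giving $q^2(q-1)|IST_{n-2}(\mathbb{F}_q)|$. You instead stratify by the vanishing pattern of $(\det(A_{n-1}),\det(A_{n-2}))$ and count the extensions $(x_n,r_{n-1})$ directly from the recurrence \eqref{eq:det}; the price is the auxiliary count $q\,|IST_{n-2}(\mathbb{F}_q)|$ of the class $\{\det(A_{n-1})=0,\ \det(A_{n-2})\neq 0\}$, which you correctly obtain by a second application of the same extension argument (with the convention $\det(A_0)=1$ handling $n=3$). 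What the paper's split buys is that each case yields one term of the recurrence with no auxiliary count and no need to know anything about $A_{n-2}$ beyond nonsingularity; what your split buys is that it avoids the congruence-transformation manipulation entirely and is precisely the stratification the paper itself uses later in the proof of Lemma \ref{lem:2.6} for the character sums $S_n(\mathbb{F}_q)$ (Cases 3.1--3.3 there), so your argument unifies the two proofs under one scheme.
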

	\begin{proof} 
		Let $n$ be a positive integer. For $n=1$, it follows from the definition of $IST_{1}(\mathbb{F}_q)$. 
		For $n=2$, we have 
		\begin{align*}
			|IST_{2}(\mathbb{F}_q)|
            =|\{(x_1,x_2,r) \in \mathbb{F}_q^3 \, :\, x_1x_2 \ne r^2\}
			=q^2(q-1).
		\end{align*}
		Assume that  $n \ge 3$. Let       \[X={\rm STDiag}\begin{pmatrix}
			&r_1&\dots&r_{n-2}&&r_{n-1}&\\
			x_1&&\dots&&x_{n-1}&&x_n
		\end{pmatrix}
		\in IST_n(\mathbb{F}_q) .\]
		We separate  the proof into  2 cases where  $x_n\ne 0$ and $x_n=0$.
		
		\noindent{\bf Case I}: $x_n \ne 0$.   Let 
		
		\begin{align*}
			Y ={\rm STDiag}\begin{pmatrix}
				&r_1&\dots&r_{n-2}&&0&\\
				x_1&&\dots&&x_{n-1}-(r_{n-1})^2(x_n)^{-1}&&x_n
			\end{pmatrix}.
		\end{align*}
		Then 

		\begin{align*}
			Y&= {\rm TDiag}\begin{pmatrix}
				&0&\dots&0&&-(x_n)^{-1}&\\
				1&&\dots&&1&&1\\
				&0&\dots&0&&0&
			\end{pmatrix} X ~  {\rm TDiag}\begin{pmatrix}
				&0&\dots&0&&0&\\
				1&&\dots&&1&&1\\
				&0&\dots&0&&-(x_n)^{-1}&
			\end{pmatrix} 
		\end{align*}        
		and 
		\begin{align*}\det(X)&=\det(Y)\\
			&=x_n\det\left({\rm STDiag}\begin{pmatrix}
				&r_1&\dots&r_{n-3}&&r_{n-2}&\\
				x_1&&\dots&&x_{n-2}&&x_{n-1}-(r_{n-1})^2(x_n)^{-1}
			\end{pmatrix}\right)\end{align*}
		which implies that  $\det(X) \ne 0 $ if and only if 
		\[\det\left({\rm STDiag}\begin{pmatrix}
			&r_1&\dots&r_{n-3}&&r_{n-2}&\\
			x_1&&\dots&&x_{n-2}&&x_{n-1}-(r_{n-1})^2(x_n)^{-1}
		\end{pmatrix}\right)\ne 0 .\]
		Let 
		$V=\left\{ A \in ST_{n-1}(\mathbb{F}_q)  
		\Bigg| \det\left(A-  {\rm Diag}\begin{pmatrix}
			0&\cdots&0&(r_{n-1})^2(x_n)^{-1}
		\end{pmatrix}\right) \ne 0 \right\}$.  Then $X \in IST_{n}(\mathbb{F}_q)$ if and only if
		\[ {\rm STDiag}\begin{pmatrix}
			&r_1&\dots&r_{n-3}&&r_{n-2}&\\
			x_1&&\dots&&x_{n-2}&&x_{n-1}-(r_{n-1})^2(x_n)^{-1}
		\end{pmatrix}  \in IST_{n-1}(\mathbb{F}_q),\]
		or equivalently, 
		\[{\rm STDiag}\begin{pmatrix}
			&r_1&\dots&r_{n-3}&&r_{n-2}&\\
			x_1&&\dots&&x_{n-2}&&x_{n-1} 
		\end{pmatrix} \in V.\]
        We note that 
		the map $f:V \xrightarrow{} IST_{n-1}(\mathbb{F}_q)$ defined by \[f(A )=A-  {\rm Diag}\begin{pmatrix}
			0&\cdots&0&(r_{n-1})^2(x_n)^{-1}
		\end{pmatrix}\] is a bijection and $|V|=|IST_{n-1}(\mathbb{F}_q)|$. 		Since $x_n \ne 0 $  and $ r_{n-1} $ is an arbitrary element in $ \mathbb{F}_q$, the number of $X$ for which $x_n \ne 0 $ and $ \det(X) \ne 0$ is  
		\begin{align}\label{case1}
			q(q-1)|V|=q(q-1)|IST_{n-1}(\mathbb{F}_q)|.
		\end{align}
		
		\noindent {\bf Case II}: $x_n = 0$. It follows that 
		\[X={\rm STDiag}\begin{pmatrix}
			&r_1&&r_2&\dots&r_{n-2}&&r_{n-1}&\\
			x_1&&x_2&&\dots&&x_{n-1}&&0
		\end{pmatrix}.\]
		By \eqref{eq:det},  we have $\det(X) = (r_{n-1})^2\det(X')$ where 
		\[X'={\rm STDiag}\begin{pmatrix}
			&r_1&&r_2&\dots&r_{n-2}&&r_{n-3}&\\
			x_1&&x_2&&\dots&&x_{n-3}&&x_{n-2}
		\end{pmatrix}.\]
		It follow that $\det(X) \ne 0$ if and only if $\det(X') \ne 0$  and $r_{n-1}\ne 0$.  Since $x_{n-1} $ and $r_{n-2}$ are arbitrary elements in $ \mathbb{F}_q$.
		Therefore, the number of $X$ for  which $x_n = 0$  and $ \det(X) \ne 0$ is 
		\begin{align}\label{case2}
			q^2(q-1)|IST_{n-2}(\mathbb{F}_q)|.
		\end{align}
		
		From (\ref{case1}) and (\ref{case2}),    we conclude that  
		\[|IST_n(\mathbb{F}_q)|=q(q-1)|IST_{n-1}(\mathbb{F}_q)|+q^2(q-1)|IST_{n-2}(\mathbb{F}_q)|\]
		as desired. 
	\end{proof}
	From the recurrence formula of $|IST_n(\mathbb{F}_q)|$ in Theorem 3.1,  an explicit formula of $|IST_n(\mathbb{F}_q)|$ is established in the following theorem.
	\begin{theorem}\label{theITcal}
		Let $n$ be a positive number and let $q$ be a prime power. Then 
		\begin{align*}
			|IST_n(\mathbb{F}_q)|=\beta_1\cdot \lambda_1^n + \beta_2\cdot \lambda_2^n ,
			\end{align*}
			where $\beta_1=\frac{(q+1)+\sqrt{(q-1)(q+3)}}{q\left((q+3)+\sqrt{(q-1)(q+3)}\right)}$,    $\beta_2=\frac{(q+1)-\sqrt{(q-1)(q+3)}}{q\left((q+3)-\sqrt{(q-1)(q+3)}\right)}$,  
			$\lambda_1=\frac{q(q-1) + q\sqrt{(q-1)(q+3)}}{2}$, and $\lambda_2=
			\frac{q(q-1) - q\sqrt{(q-1)(q+3)}}{2}.$
	 
	\end{theorem}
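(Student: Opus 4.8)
The plan is to treat the recurrence supplied by Theorem~\ref{theIT} as what it is, namely a second-order linear homogeneous recurrence with constant coefficients, and to apply the standard characteristic-root method. First I would form the characteristic equation
\[
t^2 - q(q-1)\,t - q^2(q-1) = 0
\]
associated with $|IST_n(\mathbb{F}_q)| = q(q-1)|IST_{n-1}(\mathbb{F}_q)| + q^2(q-1)|IST_{n-2}(\mathbb{F}_q)|$. Its discriminant is $q^2(q-1)^2 + 4q^2(q-1) = q^2(q-1)(q+3)$, which is strictly positive for every prime power $q \ge 2$; hence the equation has two distinct real roots, which are precisely the $\lambda_1$ and $\lambda_2$ of the statement. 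By the general theory of such recurrences, there exist constants $\beta_1, \beta_2$ for which $|IST_n(\mathbb{F}_q)| = \beta_1 \lambda_1^n + \beta_2 \lambda_2^n$ holds for all $n \ge 1$.

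Next I would pin down $\beta_1$ and $\beta_2$ from the two base cases $|IST_1(\mathbb{F}_q)| = q-1$ and $|IST_2(\mathbb{F}_q)| = q^2(q-1)$ given in Theorem~\ref{theIT}. Substituting $n=1$ and $n=2$ produces the $2 \times 2$ linear system
\[
\beta_1 \lambda_1 + \beta_2 \lambda_2 = q-1, \qquad \beta_1 \lambda_1^2 + \beta_2 \lambda_2^2 = q^2(q-1),
\]
whose coefficient matrix is a scaled Vandermonde matrix with nonzero determinant $\lambda_1 \lambda_2 (\lambda_2 - \lambda_1)$ since $\lambda_1 \neq \lambda_2$. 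Solving by Cramer's rule (or by elimination) yields explicit expressions for $\beta_1$ and $\beta_2$, which should be organized using the Vieta relations $\lambda_1 + \lambda_2 = q(q-1)$, $\lambda_1 \lambda_2 = -q^2(q-1)$, and $\lambda_1 - \lambda_2 = q\sqrt{(q-1)(q+3)}$ so that no unresolved product of the roots remains.

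The genuine work lies in simplifying these expressions into the stated closed forms, and I expect the rationalization of the surds to be the only delicate step, because the two roots are conjugate surds and the sign bookkeeping is error-prone. An efficient route is to compute $\beta_1$ in full and then read off $\beta_2$ by the symmetry $\lambda_1 \leftrightarrow \lambda_2$ (equivalently, by flipping the sign of $\sqrt{(q-1)(q+3)}$), which collapses the second computation. Multiplying numerator and denominator through by the appropriate conjugate should then deliver $\beta_1 = \dfrac{(q+1)+\sqrt{(q-1)(q+3)}}{q\left((q+3)+\sqrt{(q-1)(q+3)}\right)}$ and its mirror image $\beta_2$. As an independent safeguard against the algebra, once the closed form is in hand I would verify it directly by checking that it reproduces both base cases and that the ansatz $\beta_1 \lambda_1^n + \beta_2 \lambda_2^n$ satisfies the recurrence (immediate from the characteristic equation); this gives a clean inductive confirmation that bypasses the rationalization entirely.
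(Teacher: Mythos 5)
Your proposal is correct and follows essentially the same route as the paper: both form the characteristic equation $t^2 - q(q-1)t - q^2(q-1) = 0$ of the recurrence from Theorem~\ref{theIT}, observe that it has two distinct real roots $\lambda_1,\lambda_2$, and determine $\beta_1,\beta_2$ from the base cases $|IST_1(\mathbb{F}_q)|=q-1$ and $|IST_2(\mathbb{F}_q)|=q^2(q-1)$. The only difference is cosmetic (you suggest Cramer's rule and Vieta's relations where the paper eliminates directly), so no further comment is needed.
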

	\begin{proof}
		For a positive integer $n$, let $a_n=|IST_n(\mathbb{F}_q)|$. From Theorem \ref{theIT}, it follows that $a_1=q-1,$ $ a_2 = q^3-q^2$, and $$a_n=q(q-1)a_{n-1}+q^2(q-1)a_{n-2}$$
		for all $n \ge 3$. This equation is a linear homogeneous recurrence equation with characteristic equation
		\begin{align} \label{char}
			a^2=q(q-1)a+q^2(q-1)
		\end{align}
		It follows that
		$$a^2-q(q-1)a-q^2(q-1)=0$$
		and 
		\begin{align*}
			a &= \frac{q(q-1) \pm \sqrt{(q(q-1))^2+4q^2(q-1)}}{2}= \frac{q(q-1) \pm q\sqrt{(q-1)(q+3)}}{2}.
		\end{align*}
		Hence, \eqref{char} has two real distinct roots 
		$\lambda_1=\frac{q(q-1) + q\sqrt{(q-1)(q+3)}}{2}$~ and  ~$\lambda_2=\frac{q(q-1) - q\sqrt{(q-1)(q+3)}}{2}$ which implies that  
		$$a_n = \beta_1\left(\frac{q(q-1) + q\sqrt{(q-1)(q+3)}}{2}\right)^n+\beta_2\left(\frac{q(q-1) - q\sqrt{(q-1)(q+3)}}{2}\right)^n$$
		for some real number $\beta_1$ and $\beta_2.$
		
		Substituting $a_1$ and $a_2$, we have 
		\begin{align}\label{c1}
			p-1 = & \beta_1\frac{q(q-1) + q\sqrt{(q-1)(q+3)}}{2}+\beta_2\frac{q(q-1) - q\sqrt{(q-1)(q+3)}}{2}
		\end{align}
		and
		\begin{align}\label{c2}
			q^2(q-1) = & \beta_1\left(\frac{q(q-1) + q\sqrt{(q-1)(q+3)}}{2}\right)^2+\beta_2\left(\frac{q(q-1) - q\sqrt{(q-1)(q+3)}}{2}\right)^2
		\end{align} 
		Multiplying both sides of (\ref{c1}) by $\frac{q(q-1) - q\sqrt{(q-1)(q+3)}}{2}$, 
		it can be concluded that
		\begin{align}
			\left(\frac{q(q-1) - q\sqrt{(q-1)(q+3)}}{2}\right)(q-1)  
			=& \beta_1\left(\frac{(q(q-1))^2 - (q\sqrt{(q-1)(q+3)})^2}{4}\right)\notag\\
			&+\beta_2\left(\frac{q(q-1) - q\sqrt{(q-1)(q+3)}}{2}\right)^2\notag\\
			=& \beta_1\left(-q^2(q-1)\right)+\beta_2\left(\frac{q(q-1) - q\sqrt{(q-1)(q+3)}}{2}\right)^2. \label{multiply}
		\end{align}
		From (\ref{c2}) $-$ (\ref{multiply}), we have
		\begin{align*}
			q^2(q-1)- &\left(\frac{q(q-1) - q\sqrt{(q-1)(q+3)}}{2}\right)(q-1) 
			= \beta_1\left(\frac{q(q-1) + q\sqrt{(q-1)(q+3)}}{2}\right)^2 + \beta_1\left(q^2(q-1)\right)
		\end{align*}
		which implies that 
		\begin{align*}
			\beta_1 
			&=\frac{(q+1)+\sqrt{(q-1)(q+3)}}{q\left((q+3)+\sqrt{(q-1)(q+3)}\right)}.
		\end{align*}
		Similarly, it can be deduced that 
		\begin{align*}
			\beta_2 &= \frac{(q+1)-\sqrt{(q-1)(q+3)}}{q\left((q+3)-\sqrt{(q-1)(q+3)}\right)}.
		\end{align*}
		Therefore,   we have 
		\begin{align*}
	|IST_n(\mathbb{F}_q)|=a_n=\beta_1\cdot \lambda_1^n + \beta_2\cdot \lambda_2^n ,
\end{align*}
as desired.
	\end{proof}
	\begin{example}
		The number of symmetric tridiagonal $n \times n $ matrices over $\mathbb{F}_5$ whose determinant are non-zero can be computed using Theorem \ref{theITcal} as follows: 
	$|IST_{1}(\mathbb{F}_5)| = 4 $,
	$|IST_{2}(\mathbb{F}_5)| = 100 $,
	$|IST_{3}(\mathbb{F}_5)| = 2{,}400 $,
	$|IST_{4}(\mathbb{F}_5)| = 58{,}000 $,
	$|IST_{5}(\mathbb{F}_5)| = 1{,}400{,}000 $,
	$|IST_{6}(\mathbb{F}_5)| = 33{,}800{,}000 $,
	$|IST_{7}(\mathbb{F}_5)| = 816{,}000{,}000 $,
	$|IST_{8}(\mathbb{F}_5)| = 19{,}030{,}000{,}000 $,
	$|IST_{9}(\mathbb{F}_5)| = 456{,}000{,}000{,}000 $, and 
	$|IST_{10}(\mathbb{F}_5)| = 10{,}925{,}000{,}000{,}000$
		which coincide with the  ones in  Table \ref{T1}.
	\end{example}
	Subsequently, $|ST_n(\mathbb{F}_q,0)|$ can be deduced using Theorem \ref{theITcal} which  is presented in the following corollary.
	\begin{corollary}
		Let $n$ be a positive integer and let $q$ be a prime power. Then
		\begin{align*}  |ST_n(\mathbb{F}_q,0)|=&|ST_n(\mathbb{F}_q)|-|IST_n(\mathbb{F}_q)|
			=q^{2n-1}-\beta_1\cdot \lambda_1^n - \beta_2\cdot \lambda_2^n,
		\end{align*}
		where $\beta_1, \beta_2,\lambda_1$, and $\lambda_2$ are defined as in  Theorem \ref{theITcal}.
	\end{corollary}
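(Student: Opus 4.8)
The plan is to exploit the fact that, over a field, every matrix is either singular or invertible with no intermediate possibility. First I would observe that the determinant of any matrix in $ST_n(\mathbb{F}_q)$ lies in $\mathbb{F}_q$, which is either $0$ or a unit. Consequently, over a field the defining condition $\det(A)\in U(\mathbb{F}_q)$ is equivalent to $\det(A)\neq 0$, so that $IST_n(\mathbb{F}_q)$ is exactly the set of nonsingular symmetric tridiagonal matrices, while $ST_n(\mathbb{F}_q,0)$ is exactly the set of singular ones. These two sets are therefore disjoint, and their union is all of $ST_n(\mathbb{F}_q)$.

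From this disjoint partition I would immediately conclude
$$|ST_n(\mathbb{F}_q)| = |ST_n(\mathbb{F}_q,0)| + |IST_n(\mathbb{F}_q)|,$$
and solving for the singular count gives $|ST_n(\mathbb{F}_q,0)| = |ST_n(\mathbb{F}_q)| - |IST_n(\mathbb{F}_q)|$. This is the first equality in the statement.

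To obtain the closed form, I would then substitute the two already-established counts. Applying Proposition~\ref{prop:TnR} with $R=\mathbb{F}_q$ yields $|ST_n(\mathbb{F}_q)| = q^{2n-1}$, and Theorem~\ref{theITcal} supplies $|IST_n(\mathbb{F}_q)| = \beta_1\lambda_1^n + \beta_2\lambda_2^n$ with the constants $\beta_1,\beta_2,\lambda_1,\lambda_2$ as defined there. Combining these gives the desired expression $q^{2n-1} - \beta_1\lambda_1^n - \beta_2\lambda_2^n$.

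There is essentially no genuine obstacle here: all the analytic content is already carried by Theorem~\ref{theITcal}, and the corollary is a one-line complementary-counting argument. The only point requiring (trivial) care is the equivalence between having a unit determinant and having a nonzero determinant; this holds because $\mathbb{F}_q$ is a field, and it is precisely this equivalence that will fail in the finite chain ring setting of Section~\ref{sec4}, where the singular locus must be analyzed by stratifying the non-unit determinant values rather than by a single complement.
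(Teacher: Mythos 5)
Your proposal is correct and matches the paper's (essentially implicit) argument exactly: over a field, singular means determinant zero, so $ST_n(\mathbb{F}_q,0)$ is the complement of $IST_n(\mathbb{F}_q)$ inside $ST_n(\mathbb{F}_q)$, and the closed form follows by substituting Proposition~\ref{prop:TnR} and Theorem~\ref{theITcal}. Your closing remark about why this complementary-counting shortcut fails over chain rings is also consistent with how the paper handles Section~\ref{sec4}.
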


	\subsection{Enumeration of Symmetric Tridiagonal Matrices with Prescribed Determinant over Finite Fields} \label{Sec:3.2}
	
	In this subsection,   a relation among  $|ST_{n}(\mathbb{F}_q,a)|  $  is  presented  together with  an explicit enumeration formula for  the number of $n\times n$ symmetric tridiagonal matrices over $\mathbb{F}_q $ of determinant $a\in U(\mathbb{F}_q)$.   The results  are given based on    recurrence relations  and the  quadratic character sums.

	The following properties of quadratic residues and quadratic nonresidues in finite fields are  well known (see \cite{W2003}). These are key to study the enumeration of symmetric tridiagonal matrices with prescribed determinant over finite fields. 
    
		\begin{lemma}[{\cite[Theorem 6.18]{W2003}}]  \label{lem:SUF1}	Let $q$ be a  prime power. Then  the following statements hold.
		\begin{enumerate}
						\item If $q$ is even, then    $Q( \mathbb{F}_{q})=U( \mathbb{F}_{q})$ and $|Q( \mathbb{F}_{q})|=q-1 $.
			\item If $q$ is odd, then   $|Q( \mathbb{F}_{q})|=\dfrac{q-1}{2} = |N( \mathbb{F}_{q})|$.
		\end{enumerate} 
	\end{lemma}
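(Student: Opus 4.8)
The plan is to analyze the squaring homomorphism on the multiplicative group and to separate the two parity cases for $q$. First I would recall that $U(\mathbb{F}_q) = \mathbb{F}_q \setminus \{0\}$ is a group of order $q-1$ under multiplication (indeed a cyclic one), and introduce the map $\varphi : U(\mathbb{F}_q) \to U(\mathbb{F}_q)$ given by $\varphi(x) = x^2$. This $\varphi$ is a group homomorphism whose image is, by the very definition of a quadratic residue, exactly $Q(\mathbb{F}_q)$. Thus both parts of the statement reduce to understanding the image and kernel of $\varphi$.

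For the even case, the key observation is that $q$ even forces $\mathbb{F}_q$ to have characteristic $2$, so $\varphi$ coincides with the Frobenius map $x \mapsto x^2$. Being a field homomorphism it is injective, and an injective self-map of the finite set $U(\mathbb{F}_q)$ is automatically surjective. Hence every unit is a square, which gives $Q(\mathbb{F}_q) = U(\mathbb{F}_q)$ and therefore $|Q(\mathbb{F}_q)| = q - 1$.

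For the odd case, I would instead compute the kernel $\ker\varphi = \{x \in U(\mathbb{F}_q) : x^2 = 1\} = \{1, -1\}$. Since the characteristic is not $2$, we have $1 \neq -1$, so $|\ker\varphi| = 2$. The first isomorphism theorem then yields $|Q(\mathbb{F}_q)| = |U(\mathbb{F}_q)| / |\ker\varphi| = (q-1)/2$. Finally, because $N(\mathbb{F}_q) = U(\mathbb{F}_q) \setminus Q(\mathbb{F}_q)$ is the complement inside a set of size $q-1$, we get $|N(\mathbb{F}_q)| = (q-1) - (q-1)/2 = (q-1)/2$, establishing the claimed equality $|Q(\mathbb{F}_q)| = (q-1)/2 = |N(\mathbb{F}_q)|$.

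The proof presents no serious obstacle, as this is a classical fact; the only points requiring care are justifying that $1 \neq -1$ in the odd case, so that $\ker\varphi$ genuinely has order $2$ rather than $1$, and recognizing that in the even case squaring degenerates into the Frobenius bijection. As an alternative route that avoids the characteristic split at the level of $\varphi$, one could argue purely from the cyclic structure: fixing a generator $g$ of $U(\mathbb{F}_q)$, the element $g^j$ is a square exactly when $j$ is even, and counting such exponents modulo $q-1$ reproduces the same cardinalities in both cases.
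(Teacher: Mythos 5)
Your proof is correct: the squaring map $\varphi(x)=x^2$ on $U(\mathbb{F}_q)$ is a group homomorphism with image $Q(\mathbb{F}_q)$, it is the injective (hence bijective) Frobenius when $q$ is even, and it has kernel $\{1,-1\}$ of order $2$ when $q$ is odd, so the first isomorphism theorem and complementation give all the stated cardinalities. Note that the paper offers no proof of this lemma at all --- it is quoted verbatim from Wan's textbook (Theorem 6.18 of \cite{W2003}) --- so there is nothing to compare against; your argument is the standard one and would serve as a self-contained justification if the authors chose to include one.
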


	\begin{lemma}[{\cite[Exercise 6.19]{W2003}}]  \label{lem:SUF2}	Let $q$ be an odd prime power. Then    $-1\in  Q( \mathbb{F}_{q})$ if and only if $q\equiv 1 \pmod 4$. 
	\end{lemma}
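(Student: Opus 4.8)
The plan is to reduce the question to the parity of the exponent $(q-1)/2$ by exploiting the cyclic structure of the multiplicative group $\mathbb{F}_q^*$. Since $q$ is odd, the order $q-1$ of $\mathbb{F}_q^*$ is even, and this group is cyclic; I would fix a generator (primitive element) $g$, so that every nonzero element is uniquely $g^k$ with $0 \le k < q-1$. The squares are exactly the even powers of $g$, i.e.\ $Q(\mathbb{F}_q) = \{\, g^{2m} : m \in \mathbb{Z} \,\}$, a subgroup of index $2$; this is consistent with the count $|Q(\mathbb{F}_q)| = (q-1)/2$ recorded in Lemma~\ref{lem:SUF1}. Consequently, for any $k$, the element $g^k$ lies in $Q(\mathbb{F}_q)$ if and only if $k$ is even.

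The second step is to locate $-1$ among the powers of $g$. In a field, the polynomial $x^2 - 1$ has at most two roots, namely $1$ and $-1$, and these are distinct because $q$ is odd; hence $-1$ is the unique element of order $2$ in $\mathbb{F}_q^*$. Since $g^{(q-1)/2}$ has order $2$ (its square equals $g^{q-1} = 1$, while $g^{(q-1)/2} \ne 1$ because $g$ has order exactly $q-1$), I conclude that $-1 = g^{(q-1)/2}$. Combining this with the first step, $-1 \in Q(\mathbb{F}_q)$ if and only if the exponent $(q-1)/2$ is even, which holds precisely when $4 \mid q-1$, that is, when $q \equiv 1 \pmod 4$.

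I do not anticipate a genuine obstacle here, as the argument rests only on the cyclicity of $\mathbb{F}_q^*$ and the identification of $-1$ as its unique involution; the most delicate point is merely verifying that $g^{(q-1)/2} \ne 1$, which uses that $g$ generates a group of order exactly $q-1$. An alternative route would invoke Euler's criterion, using $a^{(q-1)/2} = 1$ as the membership test for $a \in Q(\mathbb{F}_q)$ and evaluating it at $a = -1$ to obtain $(-1)^{(q-1)/2}$, but this essentially repackages the same parity computation and still relies on the cyclic structure to justify the criterion.
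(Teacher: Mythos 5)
Your argument is correct. Note that the paper itself offers no proof of this lemma: it is quoted directly from \cite[Exercise~6.19]{W2003} as a known fact, so there is no ``paper's proof'' to compare against. Your write-up is the standard self-contained justification --- cyclicity of $\mathbb{F}_q^{*}$, identification of the squares with the even powers of a generator, and identification of $-1$ with $g^{(q-1)/2}$ as the unique involution --- and every step is sound, including the check that $g^{(q-1)/2}\neq 1$ because $g$ has order exactly $q-1$. This is entirely consistent with the surrounding material (in particular with the count $|Q(\mathbb{F}_q)|=(q-1)/2$ in Lemma~\ref{lem:SUF1}), and either your direct argument or the Euler-criterion repackaging you mention would serve as a legitimate proof if the authors chose to include one.
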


		\subsubsection{Quadratic Character Sums over Finite Fields}
		
			Let $q$ be an odd prime power and let  $\chi$ be a quadratic character of $\mathbb{F}_q$. Precisely, \[ \chi(a) 
		=\begin{cases}
			0  &\text{ if } a=0,\\
			1	&\text{ if } a\in Q(\mathbb{F}_q), \\
			-1	& \text{ if } a\in N(\mathbb{F}_q).
		\end{cases}
		\]
	For each positive integer $n$ and each prime power $q$,  let 
	\begin{align} \label{eq:sn} S_n(\mathbb{F}_q) =\sum_{a\in U(\mathbb{F}_q) } \chi(a) |ST_n(\mathbb{F}_q,a)|.
	\end{align}
	Then we have the following results.

	\begin{lemma} \label{lem:2.6} Let $q$ be an odd prime power and let $n$ be a positive integer.  Then 
		$S_1(\mathbb{F}_q)=0$, $
		S_2 (\mathbb{F}_q) = (q-1)((2q-1)\chi(-1)-(q-1))
		$, and 
		\[
		S_n(\mathbb{F}_q) = q^2(q-1)\chi(-1)S_{n-2}(\mathbb{F}_q)
		\]
		for all $n\geq 3$.

	\end{lemma}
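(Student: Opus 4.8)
The plan is to handle all three assertions through the single reformulation
\[
S_n(\mathbb{F}_q)=\sum_{A\in ST_n(\mathbb{F}_q)}\chi(\det A),
\]
which is legitimate because $\chi(0)=0$ renders the singular matrices invisible, so the sum over units in \eqref{eq:sn} is recovered. From here $S_1$ is immediate: each $a\in\mathbb{F}_q$ equals $\det[x_1]$ for exactly one matrix, so $S_1(\mathbb{F}_q)=\sum_{a\in U(\mathbb{F}_q)}\chi(a)=0$ by Lemma~\ref{lem:SUF1}.

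For the base case $S_2$ I would count $ST_2(\mathbb{F}_q,a)$ by fixing the off-diagonal entry $r$ and solving $x_1x_2=a+r^2$: this equation has $q-1$ solutions when $a+r^2\neq0$ and $2q-1$ solutions when $a+r^2=0$. Since $-a$ has $1+\chi(-a)$ square roots, this yields $|ST_2(\mathbb{F}_q,a)|=q^2+q\,\chi(-a)$ for $a\in U(\mathbb{F}_q)$. Substituting into $S_2=\sum_{a\in U}\chi(a)|ST_2(\mathbb{F}_q,a)|$ and applying $\sum_{a\in U}\chi(a)=0$ together with $\chi(a)\chi(-a)=\chi(-1)$ then collapses the base case to a closed form in $\chi(-1)$.

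The core of the lemma is the recurrence, and here I would exploit \eqref{eq:det} directly. Writing $A_n$ as $A_{n-1}$ enlarged by the corner entries $x_n$ and $r_{n-1}$ and summing over $(x_n,r_{n-1})$ with all earlier entries frozen, the map $x_n\mapsto x_n\det A_{n-1}-r_{n-1}^2\det A_{n-2}$ is a bijection of $\mathbb{F}_q$ whenever $\det A_{n-1}\neq0$, so $\sum_{x_n}\chi(\cdot)=0$ there. Only the stratum $\det A_{n-1}=0$ survives, on which the summand factors as $\chi(-1)\chi(r_{n-1}^2)\chi(\det A_{n-2})$; summing the free $x_n$ (a factor $q$) and $r_{n-1}$ (using $\sum_{r}\chi(r^2)=q-1$) leaves
\[
S_n(\mathbb{F}_q)=q(q-1)\chi(-1)\sum_{\substack{A_{n-1}\in ST_{n-1}(\mathbb{F}_q)\\ \det A_{n-1}=0}}\chi(\det A_{n-2}).
\]

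To finish I would collapse this residual sum onto $S_{n-2}$. Parametrizing $A_{n-1}$ by $(A_{n-2},x_{n-1},r_{n-2})$ and using $\det A_{n-1}=x_{n-1}\det A_{n-2}-r_{n-2}^2\det A_{n-3}$ (with the convention $\det A_0=1$ to cover $n=3$), the decisive point is that the weight $\chi(\det A_{n-2})$ already annihilates every term with $\det A_{n-2}=0$. On the complement $\det A_{n-2}\neq0$ the constraint $\det A_{n-1}=0$ is nondegenerate and linear in $x_{n-1}$, hence has exactly $q$ solution pairs $(x_{n-1},r_{n-2})$, independent of the value of $\det A_{n-3}$. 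The residual sum therefore equals $q\,S_{n-2}(\mathbb{F}_q)$, and the recurrence $S_n=q^2(q-1)\chi(-1)S_{n-2}$ follows. I expect this final collapse to be the main obstacle: it is precisely the vanishing forced by the character weight on the degenerate stratum that makes the solution count uniform, for without it the count would depend on $\det A_{n-3}$ and no clean two-term recurrence would emerge.
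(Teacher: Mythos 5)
Your treatment of $S_1$ and of the recurrence for $n\ge 3$ is essentially the paper's own argument: the paper likewise stratifies by whether $\det(A_{n-1})$ vanishes, uses the fact that $x_n\det(A_{n-1})-r_{n-1}^2\det(A_{n-2})$ runs uniformly over $\mathbb{F}_q$ in $x_n$ to kill the nonsingular stratum, notes that $\chi(0)=0$ removes the doubly singular stratum, and then counts, for each nonsingular $A_{n-2}$, the $q$ pairs $(x_{n-1},r_{n-2})$ with $\det(A_{n-1})=0$ (one $x_{n-1}$ per choice of $r_{n-2}$) to collapse the residual sum to $q\,S_{n-2}(\mathbb{F}_q)$. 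The final collapse you single out as the main obstacle is handled in exactly this way in the paper, so on these two points your proposal is correct and not genuinely different.

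The base case is where you diverge, and your route exposes a real problem. Completing your computation gives
\[
|ST_2(\mathbb{F}_q,a)|=q^2+q\,\chi(-a)
\qquad\text{and hence}\qquad
S_2(\mathbb{F}_q)=q\sum_{a\in U(\mathbb{F}_q)}\chi(a)\chi(-a)=q(q-1)\chi(-1),
\]
which agrees with the stated value $(q-1)\bigl((2q-1)\chi(-1)-(q-1)\bigr)$ only when $\chi(-1)=1$. Your value is the correct one. The paper's evaluation rests on the identity $\sum_{r\in\mathbb{F}_q}\chi(t-r^2)=-1$ for $t\neq 0$, whereas the correct value is $-\chi(-1)$, since the quadratic $t-r^2$ has leading coefficient $-1$; inserting the corrected identity into the paper's own computation also yields $q(q-1)\chi(-1)$. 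A direct check at $q=3$ confirms this: $|ST_2(\mathbb{F}_3,1)|=6$ and $|ST_2(\mathbb{F}_3,2)|=12$, so $S_2(\mathbb{F}_3)=6-12=-6=q(q-1)\chi(-1)$, not $-14$ as the lemma would give. So, strictly speaking, your proposal does not prove the lemma as stated; it proves the corrected version with $S_2(\mathbb{F}_q)=q(q-1)\chi(-1)$. The discrepancy only matters for $q\equiv 3\pmod 4$ (Table~\ref{T1}, being over $\mathbb{F}_5$, cannot detect it), and it propagates to part~(2) of Corollary~\ref{lem:S2-and-rec-mod4}, to Corollary~\ref{cor:SnR-qmod4}, and to the even-dimensional counts in Theorem~\ref{thm:STna} over such fields; all of these should be revised accordingly.
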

	\begin{proof}
	From \eqref{eq:det}, 	 the determinant of an $n\times n$ symmetric tridiagonal matrix $A_n \in ST_n(\mathbb{F}_q)$ satisfies the recurrence
	 \begin{align} \label{eq:det2}
		\det ( A_n)  = x_n \det( A_{n-1}) - r_{n-1}^2 \det(	 A_{n-2})
	\end{align}
	for all  $n\geq 3$,  where 	 $	\det ( A_1) =x_1$   and  $	\det ( A_2) =x_1x_2-r_1^2$.

	\noindent {\bf Case 1}: 
		 $n=1$.  The determinant  $\det(A_1)=r_1$ runs uniformly over $\mathbb{F}_q$. Hence,
		\[
		S_1(\mathbb{F}_q) = \sum_{a\in U(\mathbb{F}_q) } \chi(a) |ST_1(\mathbb{F}_q,a)| = \sum_{a\in U(\mathbb{F}_q) } \chi(a) | \{a\}|
		= \sum_{a\in U(\mathbb{F}_q)} \chi(a) = 0 
		\]
		since the nontrivial quadratic character sums to $0$ over $U(\mathbb{F}_q)$.
		
		\noindent {\bf Case 2}:  $n=2$.   In this case, we have $\det(A_2)=x_1x_2 - r_1^2$.  Then
		\begin{align*}
		S_2(\mathbb{F}_q) & =\sum_{a\in U(\mathbb{F}_q) } \chi(a) |ST_2(\mathbb{F}_q,a)|\\
		&= \sum _{a\in U(\mathbb{F}_q) } \chi(a) | \{(x_1,x_2,r_1) \in \mathbb{F}_q^3 \, :\, x_1x_2 - r_1^2=a\}|
	\end{align*}
Since 
	\[| \{(x_1,x_2,r_1) \in \mathbb{F}_q^3 \, :\, x_1x_2 - r_1^2=a\}| = \sum_{t\in \mathbb{F}_q} |\{(x_1,x_2)\in \mathbb{F}_q^2 \, :\,  x_1x_2=t\}|  \cdot | \{(r_1\in \mathbb{F}_q \, :\, t- r_1^2=a\}| ,\]
	we have 
		\begin{align*}
	S_2(\mathbb{F}_q) 
				&= \sum _{a\in U(\mathbb{F}_q) } \chi(a)  \left(\sum_{t\in \mathbb{F}_q} |\{(x_1,x_2)\in \mathbb{F}_q^2 \, :\,  x_1x_2=t\}|  \cdot | \{(r_1\in \mathbb{F}_q \, :\, t- r_1^2=a\}|\right)   \\
				&=  \sum_{t\in \mathbb{F}_q} |\{(x_1,x_2)\in \mathbb{F}_q^2 \, :\,  x_1x_2=t\}|  \cdot \left(\sum _{a\in U(\mathbb{F}_q) } \chi(a)  | \{(r_1\in \mathbb{F}_q \, :\, t- r_1^2=a\}|  \right) .
			\end{align*}
	Since 		
			\[\sum _{a\in U(\mathbb{F}_q) } \chi(a)  | \{(r_1\in \mathbb{F}_q \, :\, t- r_1^2=a\}| =\sum_{r_1\in \mathbb{F}_q}\chi(t-r_1^2), \]
		it follows that 
			\begin{align*}
		S_2(\mathbb{F}_q) 
			 &=
	\sum_{t\in \mathbb{F}_q}  |\{(x_1,x_2)\in \mathbb{F}_q^2 \, :\,  x_1x_2=t\}| \cdot
	\left(\sum_{r_1\in \mathbb{F}_q}\chi(t-r_1^2)\right).
 	\end{align*}
	Using	the classical identity, for odd $q$, we have 
	\[
	\sum_{r_1\in \mathbb{F}_q}\chi(t-r_1^2)=
	\begin{cases}
		q-1 & \text{if } t=0,\\[2pt]
		-1  & \text{if } t\neq 0,
	\end{cases}
	\]
and 
	\[
	 |\{(x_1,x_2)\in \mathbb{F}_q^2 \, :\,  x_1x_2=t\}| =
	\begin{cases}
		2q-1 & \text{if } t=0,\\[2pt]
		q-1  & \text{if } t\neq 0.
	\end{cases}
	\]
	Separating the case where  $t=0$ and where  $t\neq 0$,   we have 
	\[
	S_2(\mathbb{F}_q) = (2q-1)\sum_{r_1\in \mathbb{F}_q}\chi(-r_1^2) 
	+ (q-1)\sum_{t\in U(\mathbb{F}_q)}\sum_{r_1\in\mathbb{F}_q}\chi(t-r_1^2).
	\]
	We note that  $\sum\limits_{r_1\in \mathbb{F}_q}\chi(-r_1^2)=\chi(-1)(q-1)$ and  
	$\sum\limits_{r_1 \in\mathbb{F}_q}\chi(t-r_1^2)=-1$ for all $ t\in U(\mathbb{F}_q)$. Consequently, we have
	\[
	S_2(\mathbb{F}_q) = (2q-1)\chi(-1)(q-1) + (q-1)(q-1)(-1)= (q-1)\left((2q-1)\chi(-1)-(q-1)\right).
	\]

	\noindent {\bf Case 3}:  $n\geq 3$. We note that  
	 \[
	 S_n(\mathbb{F}_q) =\sum_{a\in U(\mathbb{F}_q) } \chi(a) |ST_n(\mathbb{F}_q,a)|
	  =\sum_{A_n\in ST_n(\mathbb{F}_q)} \chi(\det (A_n)).
	 \]
     Based on \eqref{eq:det2}, we consider the following three cases.

	 \noindent {\bf Case 3.1}: $\det( A_{n-1}) \neq 0$. In this case,  $x_n$ and $r_{n-1}$ are arbitrary in  $\mathbb{F}_q$. Then for each 
	  fixed $\det( A_{n-1}) \neq 0$, $x_n \det( A_{n-1}) $   runs uniformly in $\mathbb{F}_q$. Consequently,     the expression
	 \[
		\det ( A_n)  = x_n \det( A_{n-1}) - r_{n-1}^2 \det(	 A_{n-2})
	 \]
	 runs uniformly over all elements of $\mathbb{F}_q$.  
	 Hence,  the $\chi$-sum over all such $ \det( A_{n}) $ vanishes, i.e.,
	 \[
	 \sum_{x_n,r_{n-1} \in \mathbb{F}_q}\chi(\det( A_{n}) )=0.
	 \]

	 \noindent {\bf Case 3.2}: $\det( A_{n-1}) =0$ but $\det( A_{n-2}) \neq 0$.
	 From \eqref{eq:det2}, we have 
	 \[
	 \det( A_{n}) = -r_{n-1}^2 \det( A_{n-2}).
	 \]
	 In this case,  $x_n$ disappears from $ \det( A_{n}) $ which implies that the  sum over all $x_n\in\mathbb{F}_q$ gives a factor $q$.
	 Then
	 \begin{align*}
	  \sum_{x_n,r_{n-1} \in \mathbb{F}_q}\chi(\det( A_{n}) )&= q\sum_{r_{n-1}\in \mathbb{F}_q}\chi(\det( A_{n}))\\
	 &=q \sum_{r_{n-1}\in \mathbb{F}_q}\chi(-r_{n-1}^2 \det( A_{n-2}))\\
     &
	 = q\cdot \chi(-\det( A_{n-2})) \sum_{r_{n-1}\in \mathbb{F}_q}\chi(r_{n-1}^2)\\
     &= q\cdot \chi(-\det( A_{n-2}))(q-1)\\
     &=q\cdot(q-1)\cdot  \chi(-1)\chi(\det( A_{n-2})).
	 \end{align*}
     From \eqref{eq:det2}, 
     \[
\det(A_{n-1})=x_{n-1} \det(A_{n-2}) - r_{n-2}^2\,\det(A_{n-3}).
\]Since $r_{n-2}$ is arbitrary in $\mathbb{F}_q$,  there exists a unique $x_{n-1}$ such that $\det(A_{n-1})=0$. 
  Summing over all  $r_{n-2}$   in $\mathbb{F}_q$ and determinants $\det( A_{n-2})$ of size $(n-2)\times (n-2)$ matrices gives precisely
	 \[
	 q^2(q-1)\chi(-1) S_{n-2}(\mathbb{F}_q).
	 \]

	 \noindent {\bf Case 3} $\det( A_{n-1})=0$ and $\det( A_{n-2})=0$.
	 Then $\det( A_{n})=0$ which implies that  $\chi(\det( A_{n}))=\chi(0)=0$.  
 
	  In conclusion,  we   have the recurrence
	 \[
	 S_n(\mathbb{F}_q) = q^2(q-1)\chi(-1)S_{n-2}(\mathbb{F}_q)
	 \]
	for all  $n\geq 3$. 
	\end{proof}
	
	Using the mathematical induction with the recurrent relation in Lemma \ref{lem:2.6},  the following corollary can be derived immediately. 
	\begin{corollary}  \label{cor:2.7}
		Let $q$ be an odd prime power and $\chi$ the quadratic character on $\mathbb{F}_q$.  
		Let $n $ be a positive integer. Then the following statements holds.  
        \begin{enumerate}
            \item  If 		 $n$ is odd,  then $S_n(\mathbb{F}_q)=0$.
            \item If $n=2m$ is even for some positive integer $m$, then
            \[
		S_{2m}(\mathbb{F}_q) 
		= \left(q^2(q-1)\chi(-1)\right)^{m-1} S_2(\mathbb{F}_q),
		\]
		where 
		\[
		S_2(\mathbb{F}_q) = (q-1)\left((2q-1)\chi(-1)-(q-1)\right).
		\]
        \end{enumerate}
	\end{corollary}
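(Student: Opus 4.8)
The plan is to induct on $n$ using the two-step recurrence from Lemma~\ref{lem:2.6}, treating the odd and even cases separately. Since that recurrence, $S_n(\mathbb{F}_q) = q^2(q-1)\chi(-1) S_{n-2}(\mathbb{F}_q)$ for $n \geq 3$, links $S_n$ only to $S_{n-2}$, the two index chains (all odd indices, all even indices) are self-contained and can be handled independently. Each chain is anchored by one of the explicitly computed base values $S_1(\mathbb{F}_q) = 0$ and $S_2(\mathbb{F}_q) = (q-1)\left((2q-1)\chi(-1) - (q-1)\right)$ supplied by Lemma~\ref{lem:2.6}.

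For the odd case I would take $S_1(\mathbb{F}_q) = 0$ as the base and observe that if $S_{n-2}(\mathbb{F}_q) = 0$ for some odd $n \geq 3$, then the recurrence immediately yields $S_n(\mathbb{F}_q) = q^2(q-1)\chi(-1)\cdot 0 = 0$. By induction this forces $S_n(\mathbb{F}_q) = 0$ for every odd $n$, which is the first statement.

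For the even case $n = 2m$, the base case is $m = 1$: here $S_2(\mathbb{F}_q)$ is exactly the stated value, and the claimed formula reads $S_2(\mathbb{F}_q) = \left(q^2(q-1)\chi(-1)\right)^{0} S_2(\mathbb{F}_q)$, which holds trivially. For the inductive step, assuming $S_{2(m-1)}(\mathbb{F}_q) = \left(q^2(q-1)\chi(-1)\right)^{m-2} S_2(\mathbb{F}_q)$, I would apply the recurrence with $n = 2m$ to obtain $S_{2m}(\mathbb{F}_q) = q^2(q-1)\chi(-1)\, S_{2m-2}(\mathbb{F}_q) = \left(q^2(q-1)\chi(-1)\right)^{m-1} S_2(\mathbb{F}_q)$, completing the induction and establishing the second statement.

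Since all the analytic content---the evaluation of the base values and the derivation of the two-step recurrence via quadratic character sums---is already carried out in Lemma~\ref{lem:2.6}, no genuine obstacle remains at this stage. The only point requiring care is the bookkeeping of the exponent: the index $2m$ is reached from $S_2$ after exactly $m-1$ applications of the recurrence, which accounts for the power $m-1$ rather than $m$.
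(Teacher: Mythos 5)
Your proposal is correct and matches the paper's approach exactly: the paper derives this corollary "immediately" by induction on the two-step recurrence of Lemma~\ref{lem:2.6}, splitting into the odd chain anchored at $S_1(\mathbb{F}_q)=0$ and the even chain anchored at $S_2(\mathbb{F}_q)$, with the exponent $m-1$ coming from the $m-1$ applications of the recurrence needed to reach index $2m$ from index $2$.
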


	\begin{corollary} \label{lem:S2-and-rec-mod4}
		Let $q$ be an odd prime power and let $\chi$ be the quadratic character on $\mathbb{F}_q$.
		Then $S_1(\mathbb{F}_q)=0$, and the following case distinctions hold.

	\begin{enumerate}
		\item 	  If $q\equiv 1 \pmod 4$,  then
		\[
		S_2(\mathbb{F}_q)=q(q-1)  \text{ ~ and ~}
		S_n(\mathbb{F}_q)=q^2(q-1)S_{n-2}(\mathbb{F}_q) \text{ ~ for all ~} n\ge 3.
		\]
		In particular, for even $n=2m$,
		\[
		S_{2m}(\mathbb{F}_q)=q^{2m-1}(q-1)^{m}\text{ ~ and ~}
		S_{2m+1}(\mathbb{F}_q)=0 \text{ ~ for all ~} m\ge 1.
		\]
		
	\item  If $q\equiv 3 \pmod 4$, then
		\[
		S_2(\mathbb{F}_q)=-(q-1)(3q-2) \text{ ~ and ~}
		S_n(\mathbb{F}_q)=-q^2(q-1)S_{n-2}(\mathbb{F}_q)\text{ ~ for all ~} n\ge 3.
		\]
		In particular, for even $n=2m$,
		\[
		S_{2m}(\mathbb{F}_q)=(-1)^{m}(3q-2)q^{2m-2}(q-1)^{m}\text{ ~ and ~}
		S_{2m+1}(\mathbb{F}_q)=0 \text{ ~ for all ~} m\ge 1.
		\]
	\end{enumerate} 
	
 \end{corollary}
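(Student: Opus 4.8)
The plan is to obtain both congruence cases by specializing the general formulas in Corollary~\ref{cor:2.7} according to the value of $\chi(-1)$, which is pinned down by $q \bmod 4$ through Lemma~\ref{lem:SUF2}. First I would observe that, since $-1 \in U(\mathbb{F}_q)$ and $\chi$ takes the value $1$ on quadratic residues and $-1$ on quadratic nonresidues, Lemma~\ref{lem:SUF2} yields $\chi(-1) = 1$ exactly when $q \equiv 1 \pmod 4$ and $\chi(-1) = -1$ exactly when $q \equiv 3 \pmod 4$. The vanishing statements $S_1(\mathbb{F}_q) = 0$ and $S_{2m+1}(\mathbb{F}_q) = 0$ for all $m \ge 1$ follow immediately from part~(1) of Corollary~\ref{cor:2.7}, independently of the congruence class, so the remaining work concerns only the even indices.

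For $q \equiv 1 \pmod 4$ I would substitute $\chi(-1) = 1$ into the two formulas of Corollary~\ref{cor:2.7}. Inserting this into $S_2(\mathbb{F}_q) = (q-1)\left((2q-1)\chi(-1) - (q-1)\right)$ collapses the bracket to $q$, giving $S_2(\mathbb{F}_q) = q(q-1)$, while the recurrence $S_n(\mathbb{F}_q) = q^2(q-1)\chi(-1)\,S_{n-2}(\mathbb{F}_q)$ becomes $S_n(\mathbb{F}_q) = q^2(q-1)\,S_{n-2}(\mathbb{F}_q)$. The closed form then drops out of the explicit expression $S_{2m}(\mathbb{F}_q) = \left(q^2(q-1)\right)^{m-1} S_2(\mathbb{F}_q)$ by combining the powers of $q$ and of $(q-1)$ to reach $q^{2m-1}(q-1)^m$.

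For $q \equiv 3 \pmod 4$ the procedure is identical but with $\chi(-1) = -1$. The substitution gives $S_2(\mathbb{F}_q) = (q-1)\left(-(2q-1) - (q-1)\right) = -(q-1)(3q-2)$ and the signed recurrence $S_n(\mathbb{F}_q) = -q^2(q-1)\,S_{n-2}(\mathbb{F}_q)$. Feeding these into $S_{2m}(\mathbb{F}_q) = \left(q^2(q-1)\chi(-1)\right)^{m-1} S_2(\mathbb{F}_q) = (-1)^{m-1}\left(q^2(q-1)\right)^{m-1} S_2(\mathbb{F}_q)$ and absorbing the explicit negative sign of $S_2(\mathbb{F}_q)$ produces $(-1)^m(3q-2)\,q^{2m-2}(q-1)^m$.

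The argument amounts to a careful bookkeeping of the single quantity $\chi(-1)$, so I do not expect any genuine conceptual obstacle. The only point demanding attention is sign tracking in the second case: the factor $(-1)^{m-1}$ arising from the $(m-1)$-fold iteration of the signed recurrence must be combined with the explicit minus sign in $S_2(\mathbb{F}_q) = -(q-1)(3q-2)$ to yield the stated exponent $(-1)^m$, and a miscount of either contribution would flip the overall sign.
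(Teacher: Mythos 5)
Your proposal is correct and follows essentially the same route as the paper's proof: determine $\chi(-1)$ from $q \bmod 4$ via Lemma~\ref{lem:SUF2}, then specialize Lemma~\ref{lem:2.6} and Corollary~\ref{cor:2.7}. Your version simply carries out the substitution and sign bookkeeping explicitly, which the paper leaves to the reader.
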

	
	\begin{proof} The proof is separated into two cases:

		\noindent{\bf Case 1}: $q\equiv 1 \pmod 4$. By Lemma \ref{lem:SUF2},   it follows that  $\chi(-1)=1$. The result can be deduced directly from Lemma \ref{lem:2.6} and  Corollary \ref{cor:2.7}.

		\noindent{\bf Case 2}: $q\equiv 3 \pmod 4$. By Lemma \ref{lem:SUF2},   it follows that  $\chi(-1)=-1$. The result can be deduced directly from Lemma \ref{lem:2.6} and  Corollary \ref{cor:2.7}.
	\end{proof}
	
	\subsubsection{Enumeration of Symmetric Tridiagonal Matrices with Prescribed Determinant}  
	
 This subsection presents      relations among the  numbers
$\,|ST_{n}(\mathbb{F}_q,a)|\,$ as $a$ ranges over $U(\mathbb{F}_q)$  together with  an explicit formula   for
$|ST_{n}(\mathbb{F}_q,a)|$.  
The  results  appear slightly different  according to the characteristic of the fields: when $q$ is odd,
the quadratic character on $\mathbb{F}_q$ yields a nontrivial splitting
by the quadratic residue class, and the   character sum $S_n(\mathbb{F}_q)$  (see Lemma \ref{lem:2.6}) can be
exploited to separate the contributions of  quadratic residues and  quadratic nonresidues. In
contrast, when $q$ is even the squaring map is a permutation of
$U(\mathbb{F}_q)$, the quadratic character is trivial, and all unit
determinants belong to a single class; consequently $S_n(\mathbb{F}_q)$ no
longer provides additional information and the counts
$|ST_{n}(\mathbb{F}_q,a)|$ are uniform in $a\in U(\mathbb{F}_q)$.

For a quadratic residue $a=r^2 \in   Q(\mathbb{F}_q)$,  we note that the map $\alpha_r:ST_n(\mathbb{F}_q,1)\xrightarrow{}ST_n(\mathbb{F}_q,a)$
		defined by 
		
		\begin{align} \label{eq:alpr} \alpha_r( A)= {\rm Diag}\begin{pmatrix}
			1&\dots&1&r 
		\end{pmatrix}  A ~{\rm Diag}\begin{pmatrix}
			1&\dots&1&r 
		\end{pmatrix}\end{align}
		for all  $A\in  ST_n(\mathbb{F}_q,1)$,
	 is a bijection from   $ST_n(\mathbb{F}_q,1)$ onto $ST_n(\mathbb{F}_q,a)$. Moreover,

For an even prime power $q$,    we have $Q(\mathbb{F}_q)=U(\mathbb{F}_q)$ by Lemma \ref{lem:SUF1}.   For each  $a\in U(\mathbb{F}_q)$,    $a=r^2 $ for some $r\in \mathbb{F}_q$ and the map in \eqref{eq:alpr} in a bijection.  Hence,  the next corollary follows immediately.

	\begin{lemma}  \label{cor2.9}	Let $n$ be a positive integer and let $q$ be a prime power.  If $q$ is even, then 
		\[|ST_n(\mathbb{F}_q,a)|= 
		|ST_n(\mathbb{F}_q,1)|  \]
		for all $  a\in U(\mathbb{F}_q)$.   	
	\end{lemma}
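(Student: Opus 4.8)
The plan is to exhibit an explicit bijection between $ST_n(\mathbb{F}_q,1)$ and $ST_n(\mathbb{F}_q,a)$ for every unit $a$, using the congruence map $\alpha_r$ from \eqref{eq:alpr}. The starting observation is that, by Lemma~\ref{lem:SUF1}(1), the hypothesis that $q$ is even forces $Q(\mathbb{F}_q)=U(\mathbb{F}_q)$; in fact the Frobenius $x\mapsto x^2$ is an automorphism of $\mathbb{F}_q$ in characteristic $2$, so each $a\in U(\mathbb{F}_q)$ can be written (uniquely) as $a=r^2$ with $r\in U(\mathbb{F}_q)$. Fixing such an $r$, I would set $D={\rm Diag}\begin{pmatrix}1&\dots&1&r\end{pmatrix}$, so that $\alpha_r(A)=DAD$, and then check that $\alpha_r$ is a bijection onto $ST_n(\mathbb{F}_q,a)$.

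There are three properties to verify. First, congruence by the diagonal matrix $D$ preserves the symmetric tridiagonal shape: writing $A\in ST_n(\mathbb{F}_q,1)$ with diagonal entries $x_1,\dots,x_n$ and off-diagonal entries $u_1,\dots,u_{n-1}$, left multiplication by $D$ scales the last row by $r$ and right multiplication scales the last column by $r$, so $\alpha_r(A)$ is again symmetric tridiagonal, with last diagonal entry $r^2x_n$, last off-diagonal entry $ru_{n-1}$, and all remaining entries unchanged. Second, $\det\alpha_r(A)=\det(D)^2\det(A)=r^2\cdot 1=a$, so $\alpha_r$ indeed maps $ST_n(\mathbb{F}_q,1)$ into $ST_n(\mathbb{F}_q,a)$. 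Third, since $r\in U(\mathbb{F}_q)$ the matrix $D$ is invertible, and the assignment $B\mapsto D^{-1}BD^{-1}$ carries $ST_n(\mathbb{F}_q,a)$ back into $ST_n(\mathbb{F}_q,1)$ (its determinant becomes $r^{-2}a=1$) and is a two-sided inverse of $\alpha_r$. Hence $\alpha_r$ is a bijection and $|ST_n(\mathbb{F}_q,a)|=|ST_n(\mathbb{F}_q,1)|$.

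There is no genuine obstacle here: the argument is just the characteristic-$2$ specialization of the squares-only bijection already recorded immediately before the statement, where the point is precisely that in even characteristic \emph{every} unit is a square, so no nonresidue class survives. The only step requiring care is the bookkeeping for the entry computation — confirming that congruence by $D$ creates no entries outside the three diagonals and that the determinant scales by exactly $r^2=a$. I would present that entry-level verification once, explicitly, to keep the bijection self-contained and then conclude the cardinality equality for all $a\in U(\mathbb{F}_q)$.
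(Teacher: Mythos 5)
Your proposal is correct and follows essentially the same route as the paper: since $q$ is even, every unit is a square (Lemma~\ref{lem:SUF1}), and the congruence map $\alpha_r$ of \eqref{eq:alpr} with $a=r^2$ gives the bijection $ST_n(\mathbb{F}_q,1)\to ST_n(\mathbb{F}_q,a)$. Your added entry-level verification that conjugation by ${\rm Diag}(1,\dots,1,r)$ preserves the symmetric tridiagonal shape and scales the determinant by $r^2$ simply makes explicit what the paper leaves implicit.
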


		\begin{corollary}  \label{cor2.10}
		Let $n$ be a positive integer and let $a \in U(\mathbb{F}_q)$.  If  $q$ is even, then
		\begin{align*}
			|ST_n(\mathbb{F}_q,a)|
		=\frac{1}{q-1}\left(\beta_1\cdot \lambda_1^n + \beta_2\cdot \lambda_2^n\right)
		\end{align*}
		where $r_1, r_2,\lambda_1$, and $\lambda_2$ are defined as in  Theorem \ref{theITcal}.
	\end{corollary}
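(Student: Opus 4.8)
The plan is to exploit the fact that the set of nonsingular symmetric tridiagonal matrices splits as a disjoint union over the prescribed-determinant classes indexed by units, and then to invoke the uniformity supplied by Lemma~\ref{cor2.9}. First I would record the partition
\[
IST_n(\mathbb{F}_q) \;=\; \bigsqcup_{a\in U(\mathbb{F}_q)} ST_n(\mathbb{F}_q,a),
\]
which is immediate from the definition $IST_n(\mathbb{F}_q)=\{A\in ST_n(\mathbb{F}_q):\det(A)\in U(\mathbb{F}_q)\}$: a symmetric tridiagonal matrix over $\mathbb{F}_q$ is nonsingular precisely when its determinant is a (unique) unit, so distinct values of $a$ give disjoint classes whose union exhausts $IST_n(\mathbb{F}_q)$. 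Passing to cardinalities then yields
\[
|IST_n(\mathbb{F}_q)| \;=\; \sum_{a\in U(\mathbb{F}_q)} |ST_n(\mathbb{F}_q,a)|.
\]

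Next, because $q$ is even, Lemma~\ref{cor2.9} asserts that every summand on the right equals $|ST_n(\mathbb{F}_q,1)|$. Since $|U(\mathbb{F}_q)|=q-1$, the sum collapses to $(q-1)\,|ST_n(\mathbb{F}_q,1)|$, and hence
\[
|ST_n(\mathbb{F}_q,a)| \;=\; |ST_n(\mathbb{F}_q,1)| \;=\; \frac{|IST_n(\mathbb{F}_q)|}{q-1}
\]
for every $a\in U(\mathbb{F}_q)$. Finally, I would substitute the closed form $|IST_n(\mathbb{F}_q)|=\beta_1\lambda_1^{n}+\beta_2\lambda_2^{n}$ from Theorem~\ref{theITcal} to arrive at the claimed expression.

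There is essentially no analytic obstacle in this argument: its entire content reduces to the disjoint-union bookkeeping together with the equal-size statement of Lemma~\ref{cor2.9}, after which the conclusion is a single division by $q-1$. The only point deserving a moment of care is confirming that the union is genuinely disjoint and covers all of $IST_n(\mathbb{F}_q)$, i.e.\ that each nonsingular matrix has a unit determinant lying in exactly one class; but this is forced by the definition of $IST_n(\mathbb{F}_q)$ and the fact that a determinant over $\mathbb{F}_q$ is a single element of $U(\mathbb{F}_q)$. I would therefore expect the write-up to be short, with the substitution from Theorem~\ref{theITcal} as its only genuinely computational ingredient.
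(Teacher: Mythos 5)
Your proposal is correct and follows essentially the same route as the paper: partition $IST_n(\mathbb{F}_q)$ into the disjoint classes $ST_n(\mathbb{F}_q,a)$ over $a\in U(\mathbb{F}_q)$, invoke Lemma~\ref{cor2.9} to see all $q-1$ classes have equal size when $q$ is even, and substitute the closed form from Theorem~\ref{theITcal}. (The paper's write-up even contains a stray ``assume $n$ is odd'' that plays no role; your version is, if anything, cleaner.)
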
   
	\begin{proof}
		Assume the $n$ is  odd. 
		Since $IST(\mathbb{F}_q)$ is the disjoint union of those $ST(\mathbb{F}_q,a)$ for which $a \in U( \mathbb{F}_q)$,  we have  
		\[IST_n(\mathbb{F}_q)=\bigcup_{b\in U(\mathbb{F}_q)}^{}(\mathbb{F}_q,b)\]
		and
		\begin{align*}
			\big|IST_n(\mathbb{F}_q)\big|&=\Big|\bigcup_{b\in U(\mathbb{F}_q)}^{}(\mathbb{F}_q,b)\Big|=\sum_{b\in U(\mathbb{F}_q)}^{}\big|ST(\mathbb{F}_q,b)\big|=(q-1)\big|ST(\mathbb{F}_q,1)\big|
		\end{align*}
		By Theorem \ref{theITcal} and  Lemma  \ref{cor2.9},  we have  
		\begin{align*}
			|ST_n(\mathbb{F}_q,a)\big|=&|ST_n(\mathbb{F}_q,1)\big|
			=\frac{|IST(\mathbb{F}_q)\big|}{(q-1)}
				=\frac{1}{q-1}\left(\beta_1\cdot \lambda_1^n + \beta_2\cdot \lambda_2^n\right)
			\end{align*} 
            as desired. 
	\end{proof}

	\begin{theorem} \label{Thm:a=b}
		Let $n$ be a positive integer and let $q$ be a prime power. Let $a\in U(\mathbb{F}_q)$   and $b\in N(\mathbb{F}_q) $.   Then
		\[|ST_n(\mathbb{F}_q,a)|=\begin{cases}
			|ST_n(\mathbb{F}_q,1)| &\text{ if }  a\in Q(\mathbb{F}_q),\\
			|ST_n(\mathbb{F}_q,b)| &\text{ if } a\in N(\mathbb{F}_q) .
		\end{cases}\] 
	\end{theorem}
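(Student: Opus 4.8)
The plan is to reduce both cases to a single scaling bijection together with the coset structure of $U(\mathbb{F}_q)$ modulo squares. The essential observation is that the map $\alpha_r$ from \eqref{eq:alpr} --- conjugation by ${\rm Diag}(1,\dots,1,r)$ --- works not only on determinant-one matrices but on matrices of any fixed determinant and for any unit $r$. Indeed, since ${\rm Diag}(1,\dots,1,r)$ is diagonal, the conjugation $D A D$ preserves the symmetric tridiagonal shape (it merely rescales entries, and $(DAD)^T = DAD$ when $A=A^T$), and $\det(DAD)=\det(D)^2\det(A)=r^2\det(A)$. Hence $\alpha_r$ restricts to a bijection $ST_n(\mathbb{F}_q,c)\to ST_n(\mathbb{F}_q,r^2c)$ for every $c\in\mathbb{F}_q$ and every $r\in U(\mathbb{F}_q)$. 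Establishing this generalized version once, so that both cases follow uniformly, is the only point requiring genuine (if modest) care; it is the natural extension of the computation already underlying \eqref{eq:alpr}.

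First I would dispose of the residue case. If $a\in Q(\mathbb{F}_q)$, then $a=r^2$ for some $r\in U(\mathbb{F}_q)$, and the bijection $\alpha_r\colon ST_n(\mathbb{F}_q,1)\to ST_n(\mathbb{F}_q,a)$ recorded just before the theorem gives $|ST_n(\mathbb{F}_q,a)|=|ST_n(\mathbb{F}_q,1)|$ at once. This also settles the entire statement when $q$ is even: by Lemma \ref{lem:SUF1} we then have $N(\mathbb{F}_q)=\emptyset$ and $Q(\mathbb{F}_q)=U(\mathbb{F}_q)$, so the hypothesis $b\in N(\mathbb{F}_q)$ is vacuous and every unit determinant falls into the first case, recovering Lemma \ref{cor2.9}.

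For the nonresidue case I would invoke the group-theoretic fact that, for odd $q$, the set of squares $Q(\mathbb{F}_q)$ is an index-two subgroup of $U(\mathbb{F}_q)$ (Lemma \ref{lem:SUF1}), so the nonresidues form a single coset. Consequently, for any $a,b\in N(\mathbb{F}_q)$ the quotient $ab^{-1}$ is a square, say $ab^{-1}=s^2$ with $s\in U(\mathbb{F}_q)$, whence $a=s^2b$. Applying the generalized bijection $\alpha_s\colon ST_n(\mathbb{F}_q,b)\to ST_n(\mathbb{F}_q,s^2b)=ST_n(\mathbb{F}_q,a)$ yields $|ST_n(\mathbb{F}_q,a)|=|ST_n(\mathbb{F}_q,b)|$, which is exactly the second branch of the claim. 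Thus the whole theorem follows from the determinant-scaling bijection and the observation that $U(\mathbb{F}_q)/Q(\mathbb{F}_q)$ has order at most two.
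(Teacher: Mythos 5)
Your proposal is correct and follows essentially the same route as the paper: both reduce each case to the determinant-scaling bijection $A\mapsto {\rm Diag}(1,\dots,1,r)\,A\,{\rm Diag}(1,\dots,1,r)$, handling the residue case via $a=r^2$ and the nonresidue case via the observation that the ratio of two nonresidues is a square (the paper writes $a^{-1}b=r^2$ and maps $ST_n(\mathbb{F}_q,a)\to ST_n(\mathbb{F}_q,b)$, which is the same argument in the opposite direction). Your explicit remark that the nonresidue branch is vacuous for even $q$ is a small but welcome clarification the paper leaves implicit.
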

	\begin{proof}  For the case where  $a \in Q(\mathbb{F}_q)$, the result follows immediately from \eqref{eq:alpr}.  In the case where
		$a  \in N(\mathbb{F}_q)$,  we have  $a^{-1}\in N(\mathbb{F}_q)$ which implies that    $a^{-1}b \in Q(\mathbb{F}_q)$. 
		Precisely,  $a^{-1}b = r^2$ for some $r \in \mathbb{F}_q$.
		Then the map $\beta_r:ST_n(\mathbb{F}_q,a)\xrightarrow{}ST_n(\mathbb{F}_q,b)$
		defined by  
		\[\beta_r( A)= {\rm Diag}\begin{pmatrix}
			1&\dots&1&r 
		\end{pmatrix}  A ~{\rm Diag}\begin{pmatrix}
			1&\dots&1&r 
		\end{pmatrix}\]
		for all  $A\in  ST_n(\mathbb{F}_q,a)$, 
	 is a bijection from   $ST_n(\mathbb{F}_q,a)$ onto $ST_n(\mathbb{F}_q,b)$.    Therefore,       $|ST_n(\mathbb{F}_q,a)|=|ST_n(\mathbb{F}_q,b)|$ as desired.
	\end{proof}

	\begin{theorem}\label{thm:STna} Let $q$ be an odd prime power and let $n$ be a positive integer.  Let  $a\in U(\mathbb{F}_q)$.  Then
		
		\[
		|ST_n(\mathbb{F}_q,a)|=
		\begin{cases}
			\dfrac{|IST_n(\mathbb{F}_q)|+S_n(\mathbb{F}_q)}{q-1}, & \text{if } a  \in Q(\mathbb{F}_q),\\ \\
			\dfrac{|IST_n(\mathbb{F}_q)|-S_n(\mathbb{F}_q)}{q-1}, & \text{if } a \in N(\mathbb{F}_q).
		\end{cases}\]
		
	\end{theorem}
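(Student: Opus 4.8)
The plan is to reduce the computation of $|ST_n(\mathbb{F}_q,a)|$ to solving a two-by-two linear system whose right-hand sides are exactly $|IST_n(\mathbb{F}_q)|$ and $S_n(\mathbb{F}_q)$. The starting observation is Theorem \ref{Thm:a=b}: since $q$ is odd, the value $|ST_n(\mathbb{F}_q,a)|$ depends only on the quadratic class of $a$, so it suffices to determine the two quantities $R := |ST_n(\mathbb{F}_q,1)|$ (the common value for $a\in Q(\mathbb{F}_q)$) and $N := |ST_n(\mathbb{F}_q,b)|$ for a fixed $b\in N(\mathbb{F}_q)$ (the common value for $a\in N(\mathbb{F}_q)$). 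Everything then comes down to expressing $|IST_n(\mathbb{F}_q)|$ and $S_n(\mathbb{F}_q)$ in terms of $R$ and $N$.

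First I would set up the \emph{total} equation. Since $IST_n(\mathbb{F}_q)$ is the disjoint union of the fibers $ST_n(\mathbb{F}_q,a)$ over $a\in U(\mathbb{F}_q)$, summing cardinalities and invoking Lemma \ref{lem:SUF1}(2), which gives $|Q(\mathbb{F}_q)|=|N(\mathbb{F}_q)|=\tfrac{q-1}{2}$, yields $|IST_n(\mathbb{F}_q)|=\tfrac{q-1}{2}(R+N)$. Next I would expand $S_n(\mathbb{F}_q)$ directly from its defining sum \eqref{eq:sn}: grouping the units by quadratic class and using $\chi(a)=1$ on $Q(\mathbb{F}_q)$ and $\chi(a)=-1$ on $N(\mathbb{F}_q)$ gives $S_n(\mathbb{F}_q)=\tfrac{q-1}{2}R-\tfrac{q-1}{2}N=\tfrac{q-1}{2}(R-N)$.

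With these two identities in hand the result is immediate: adding them gives $|IST_n(\mathbb{F}_q)|+S_n(\mathbb{F}_q)=(q-1)R$, hence $R=\bigl(|IST_n(\mathbb{F}_q)|+S_n(\mathbb{F}_q)\bigr)/(q-1)$, while subtracting gives $|IST_n(\mathbb{F}_q)|-S_n(\mathbb{F}_q)=(q-1)N$, hence $N=\bigl(|IST_n(\mathbb{F}_q)|-S_n(\mathbb{F}_q)\bigr)/(q-1)$. Substituting back through Theorem \ref{Thm:a=b} produces the claimed case split. There is no genuine analytic obstacle here; the only points requiring care are that the reduction to exactly two unknowns relies on Theorem \ref{Thm:a=b}, and that the coefficient $\tfrac{q-1}{2}$ appears symmetrically in both equations precisely because $Q(\mathbb{F}_q)$ and $N(\mathbb{F}_q)$ are equinumerous for odd $q$ (Lemma \ref{lem:SUF1}(2)); this equinumerosity is exactly what makes the system invertible and decouples the residue and nonresidue counts into the clean $\pm S_n(\mathbb{F}_q)$ formulas.
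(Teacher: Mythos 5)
Your proposal is correct and follows essentially the same route as the paper: both set up the pair of linear equations $|IST_n(\mathbb{F}_q)|=\sum_{u\in Q}|ST_n(\mathbb{F}_q,u)|+\sum_{u\in N}|ST_n(\mathbb{F}_q,u)|$ and $S_n(\mathbb{F}_q)=\sum_{u\in Q}|ST_n(\mathbb{F}_q,u)|-\sum_{u\in N}|ST_n(\mathbb{F}_q,u)|$, solve the resulting $2\times 2$ system, and use Theorem~\ref{Thm:a=b} together with $|Q(\mathbb{F}_q)|=|N(\mathbb{F}_q)|=\tfrac{q-1}{2}$ to convert the class sums into the individual counts. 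The only difference is cosmetic: you invoke Theorem~\ref{Thm:a=b} at the outset to reduce to two unknowns, whereas the paper applies it at the end.
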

	\begin{proof}  We note that  \[IST_n(\mathbb{F}_q)=\bigcup_{u\in U(\mathbb{F}_q)} ST_n(\mathbb{F}_q,u) =\bigcup_{u\in Q(\mathbb{F}_q)} ST_n(\mathbb{F}_q,u) \cup \bigcup_{u\in N(\mathbb{F}_q)} ST_n(\mathbb{F}_q,u)\]
		is a disjoint union. It follows that 
		\begin{align} 
			|IST_n(\mathbb{F}_q)|&=|\bigcup_{u\in Q(\mathbb{F}_q)} ST_n(\mathbb{F}_q,u) |+| \bigcup_{a\in N(\mathbb{F}_q)} ST_n(\mathbb{F}_q,u)| \notag\\
			&=\sum_{u\in Q(\mathbb{F}_q)} |ST_n(\mathbb{F}_q,u) |+ \sum_{a\in N(\mathbb{F}_q)} |ST_n(\mathbb{F}_q,u)|. \label{eqmain1}
		\end{align}
		From \eqref{eq:sn}, we have 
		\begin{align}
			S_n(\mathbb{F}_q) =\sum_{u\in U(\mathbb{F}_q) } \chi(u) |ST_n(\mathbb{F}_q,u)| = \sum_{u\in Q(\mathbb{F}_q)} |ST_n(\mathbb{F}_q,u) |- \sum_{u\in N(\mathbb{F}_q)} |ST_n(\mathbb{F}_q,u)|\label{eqmain2} .\end{align}				
		Solve the system of linear equations \eqref{eqmain1} and \eqref{eqmain2}, we have 
		\begin{align} \label{eq:Q}
			2 \sum_{u\in Q(\mathbb{F}_q)} |ST_n(\mathbb{F}_q,u) |
			=
			|IST_n(\mathbb{F}_q)|  +S_n(\mathbb{F}_q) 
		\end{align}
		and 
		\begin{align}  \label{eq:N}
			2 \sum_{a\in N(\mathbb{F}_q)} |ST_n(\mathbb{F}_q,a) |
			=
			|IST_n(\mathbb{F}_q)|  -S_n(\mathbb{F}_q) 
		\end{align}			
		For $a\in  Q(\mathbb{F}_q)$,  by Theorem \ref{Thm:a=b},  we have 
		
	\[	\sum_{u\in Q(\mathbb{F}_q)} |ST_n(\mathbb{F}_q,u) |  = \sum_{|Q(\mathbb{F}_q)|}  |ST_n(\mathbb{F}_q,a    ) |= \frac{q-1}{2}  |ST_n(\mathbb{F}_q,a    ) |. \] 
	Together with \eqref{eq:Q},  we have  \[|ST_n(\mathbb{F}_q,a    ) |= \frac{2}{q-1} \sum_{u\in Q(\mathbb{F}_q)} |ST_n(\mathbb{F}_q,u) |=\dfrac{|IST_n(\mathbb{F}_q)|+S_n(\mathbb{F}_q)}{q-1} .\]
			For $a\in  N(\mathbb{F}_q)$,  by Theorem \ref{Thm:a=b},  we have 
	
	\[	\sum_{u\in N(\mathbb{F}_q)} |ST_n(\mathbb{F}_q,u) |  = \sum_{|N(\mathbb{F}_q)|}  |ST_n(\mathbb{F}_q,a    ) |= \frac{q-1}{2}  |ST_n(\mathbb{F}_q,a    ) |. \] 
	Together with \eqref{eq:N},  we have  \[|ST_n(\mathbb{F}_q,a    ) |= \frac{2}{q-1} \sum_{u\in N(\mathbb{F}_q)} |ST_n(\mathbb{F}_q,u) |=\dfrac{|IST_n(\mathbb{F}_q)|-S_n(\mathbb{F}_q)}{q-1} .\]
    This completes the proof. 
	\end{proof}

Since $S_n(\mathbb{F}_q)$ and $|IST_n(\mathbb{F}_q)|$  are determined explicitly in Lemma~\ref{lem:2.6} and Theorem~\ref{theITcal}, the enumeration of symmetric tridiagonal matrices with prescribed determinant over $\mathbb{F}_q$ is thereby completed.

	In the case where $n$ is odd,   $S_n(\mathbb{F}_q)=0$ by Corollary \ref{cor:2.7}.  Hence,  \[|ST_n(\mathbb{F}_q,a)| =|ST_n(\mathbb{F}_q,1)|=\dfrac{|IST_n(\mathbb{F}_q)| }{q-1}\] for all $ a  \in U(\mathbb{F}_q)$ by Theorem \ref{thm:STna}.  Alternatively,    an  explicit link between $|ST_n(\mathbb{F}_q,a)| $ and $|ST_n(\mathbb{F}_q,1)$ is given in the following theorem.

	\begin{theorem}\label{a=1}
		Let $n$ be a positive number and let $q$ be a prime power. If $n$ is odd, then
		\[|ST_n(\mathbb{F}_q,a)|=|ST_n(\mathbb{F}_q,1)|\]
		for all $a \in U( \mathbb{F}_q)$
	\end{theorem}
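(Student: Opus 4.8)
The plan is to give a direct bijective argument, independent of the character sum $S_n(\mathbb{F}_q)$. The goal is, for each $a \in U(\mathbb{F}_q)$, to exhibit a bijection between $ST_n(\mathbb{F}_q,1)$ and $ST_n(\mathbb{F}_q,a)$. Two structure-preserving operations are available: first, the diagonal congruence $A \mapsto D A D$ with $D = \mathrm{Diag}(d_1,\dots,d_n)$, which keeps a matrix symmetric and tridiagonal and multiplies its determinant by $(d_1\cdots d_n)^2$; second, scalar multiplication $A \mapsto cA$ by a unit $c$, which also preserves the symmetric tridiagonal shape and multiplies the determinant by $c^n$. Each of these is a bijection of $ST_n(\mathbb{F}_q)$ onto itself (with inverse given by $D^{-1}$ and $c^{-1}$, respectively), so it suffices to realize multiplication of the determinant by $a$ as a composition of such maps.

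When $q$ is even the statement is already contained in Lemma \ref{cor2.9}, so assume $q$ is odd. For $a \in Q(\mathbb{F}_q)$ the map $\alpha_r$ of \eqref{eq:alpr} (equivalently, Theorem \ref{Thm:a=b}) already gives $|ST_n(\mathbb{F}_q,a)| = |ST_n(\mathbb{F}_q,1)|$, so the content of the odd-$n$ hypothesis lies entirely in the nonsquare case. Fix a generator $g$ of the cyclic group $U(\mathbb{F}_q)$; since $q$ is odd, $g$ is a quadratic nonresidue. Scalar multiplication by $g$ restricts to a bijection $ST_n(\mathbb{F}_q,1) \to ST_n(\mathbb{F}_q, g^n)$ because $\det(gA) = g^n \det(A)$. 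As $n$ is odd, $g^n$ is again a nonsquare, so $g^n \in N(\mathbb{F}_q)$, and therefore $|ST_n(\mathbb{F}_q,1)| = |ST_n(\mathbb{F}_q, g^n)|$.

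To conclude, I would invoke Theorem \ref{Thm:a=b}, which asserts that $|ST_n(\mathbb{F}_q,b)|$ takes a single common value over all $b \in N(\mathbb{F}_q)$. Combining this with the previous paragraph yields $|ST_n(\mathbb{F}_q,b)| = |ST_n(\mathbb{F}_q, g^n)| = |ST_n(\mathbb{F}_q,1)|$ for every nonsquare $b$, while the square case gives the same value; hence $|ST_n(\mathbb{F}_q,a)| = |ST_n(\mathbb{F}_q,1)|$ for all $a \in U(\mathbb{F}_q)$.

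The step I expect to be the crux is the observation that makes the parity of $n$ decisive: scalar multiplication scales the determinant by $c^n$, and only when $n$ is odd does choosing a nonsquare $c$ guarantee that $c^n$ lands in the nonsquare class $N(\mathbb{F}_q)$, thereby letting a single scalar map carry the count across the quadratic residue/nonresidue boundary. (If $n$ were even, $c^n$ would always be a square and this bridge would collapse, consistent with the genuinely different even-dimensional behaviour recorded in Table \ref{T1}.) Everything else — that the two operations preserve the symmetric tridiagonal pattern, are bijective, and have the stated effect on determinants — is routine, and the remaining bookkeeping reduces to citing \eqref{eq:alpr} and Theorem \ref{Thm:a=b}.
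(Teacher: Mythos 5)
Your argument is correct, but it is not the route the paper takes. The paper's proof of Theorem \ref{a=1} exhibits, for every unit $a$ at once, a single explicit bijection
$\varphi_a(A)=\mathrm{Diag}\begin{pmatrix}a&1&\dots&1&a\end{pmatrix}\,A\,\mathrm{Diag}\begin{pmatrix}1&a^{-1}&\dots&a^{-1}&1\end{pmatrix}$,
whose alternating pattern multiplies the odd-indexed diagonal entries by $a$, the even-indexed ones by $a^{-1}$, leaves the off-diagonal entries fixed (so symmetry and the tridiagonal shape are preserved even though the two diagonal factors differ), and scales the determinant by $a^{(n+1)/2}\cdot a^{-(n-1)/2}=a$; oddness of $n$ enters only through this exponent count, and no case split on the square class of $a$ and no appeal to Theorem \ref{Thm:a=b} is needed. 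Your proof instead uses scalar multiplication $A\mapsto gA$ by a nonresidue to land in $ST_n(\mathbb{F}_q,g^n)$ with $g^n\in N(\mathbb{F}_q)$ (correctly noting that $\chi(g^n)=\chi(g)$ precisely because $n$ is odd, and correctly not claiming that $c\mapsto c^n$ is surjective on units, which it need not be), and then invokes the square-class invariance of Theorem \ref{Thm:a=b} to spread the count over all of $N(\mathbb{F}_q)$. Both proofs operate inside the same group of shape-preserving maps --- indeed $\varphi_a(A)=a\cdot DAD$ with $D=\mathrm{Diag}\begin{pmatrix}1&a^{-1}&\dots&a^{-1}&1\end{pmatrix}$, so the paper's map is itself a congruence composed with a scalar --- but yours is more modular (it isolates exactly where parity matters and reuses an earlier theorem), while the paper's is more uniform and self-contained, giving a one-step bijection onto $ST_n(\mathbb{F}_q,a)$ for each individual $a$. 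Your handling of the even-$q$ and quadratic-residue cases by citing Lemma \ref{cor2.9} and \eqref{eq:alpr} is fine; there is no gap.
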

	\begin{proof}
		Assume  that $n$ is  odd.  Let  $a \in U( \mathbb{F}_q)$.  Then  the map 
		$\varphi_a :ST_n(\mathbb{F}_q,1)\to ST_n(\mathbb{F}_q,a)$
		defined by 
		
		\[\varphi_a( A)= {\rm Diag}\begin{pmatrix}
			a&1&a\dots&1&a
		\end{pmatrix}  A ~{\rm Diag}\begin{pmatrix}
			1&a^{-1}&1\dots&a^{-1}&1 
		\end{pmatrix}\]
		for all  $A\in  ST_n(\mathbb{F}_q,1)$,   is a bijection from   $ST_n(\mathbb{F}_q,1)$ onto $ST_n(\mathbb{F}_q,a)$.    Therefore,       $|ST_n(\mathbb{F}_q,a)|=|ST_n(\mathbb{F}_q,1)|$ as desired.
	\end{proof}

	\begin{corollary}
		Let $n$ be a positive integer and let $a \in U(\mathbb{F}_q)$.  If  $q$ is odd and $n$ is odd, then
		\begin{align*}
			|ST_n(\mathbb{F}_q,a)|
				=\frac{1}{q-1}\left(\beta_1\cdot \lambda_1^n + \beta_2\cdot \lambda_2^n\right)
			\end{align*}
			where $\beta_1, \beta_2,\lambda_1$, and $\lambda_2$ are defined as in  Theorem \ref{theITcal}.
	\end{corollary}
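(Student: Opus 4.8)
The plan is to combine the determinant-fiber decomposition of $IST_n(\mathbb{F}_q)$ with the uniformity result of Theorem~\ref{a=1}. First I would observe that a matrix in $ST_n(\mathbb{F}_q)$ lies in $IST_n(\mathbb{F}_q)$ exactly when its determinant is a unit, so the set of nonsingular matrices partitions according to the value of the determinant:
\[
IST_n(\mathbb{F}_q)=\bigcup_{u\in U(\mathbb{F}_q)} ST_n(\mathbb{F}_q,u),
\]
a disjoint union since distinct fibers correspond to distinct determinant values. Taking cardinalities gives $|IST_n(\mathbb{F}_q)|=\sum_{u\in U(\mathbb{F}_q)}|ST_n(\mathbb{F}_q,u)|$.

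Next, since $n$ is odd, Theorem~\ref{a=1} applies and yields $|ST_n(\mathbb{F}_q,u)|=|ST_n(\mathbb{F}_q,1)|$ for every $u\in U(\mathbb{F}_q)$; that is, all fibers have the same size. As $|U(\mathbb{F}_q)|=q-1$, the sum above collapses to $(q-1)\,|ST_n(\mathbb{F}_q,1)|$, so that $|ST_n(\mathbb{F}_q,1)|=|IST_n(\mathbb{F}_q)|/(q-1)$. Invoking Theorem~\ref{a=1} once more to identify $|ST_n(\mathbb{F}_q,1)|$ with $|ST_n(\mathbb{F}_q,a)|$ for the prescribed unit $a$, I would conclude $|ST_n(\mathbb{F}_q,a)|=|IST_n(\mathbb{F}_q)|/(q-1)$.

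Finally, substituting the closed form $|IST_n(\mathbb{F}_q)|=\beta_1\lambda_1^n+\beta_2\lambda_2^n$ from Theorem~\ref{theITcal} produces the stated expression. I expect no genuine obstacle here: the argument is a direct synthesis of averaging over the $q-1$ equal fibers with the two quoted results. The hypothesis that $q$ is odd is not essential to the counting (Theorem~\ref{a=1} holds for every prime power $q$); it serves only to set this corollary against the even-characteristic statement of Corollary~\ref{cor2.10}, the two formulas being identical in form.
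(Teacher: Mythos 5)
Your proposal is correct and follows essentially the same route as the paper: the paper's proof likewise decomposes $IST_n(\mathbb{F}_q)$ into the $q-1$ determinant fibers, invokes Theorem \ref{a=1} for their equality when $n$ is odd, and substitutes the closed form from Theorem \ref{theITcal}. Your side remark that the hypothesis ``$q$ odd'' is not actually needed for the argument is also accurate, since Theorem \ref{a=1} is stated for arbitrary prime powers.
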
   
	\begin{proof}
		 Using the arguments similar to those in the proof of Corollary \ref{cor2.10}.
		While   Theorem \ref{theITcal} and Theorem \ref{a=1} are applied instead of Lemma \ref{cor2.9}
	\end{proof}

	\section{Enumeration of Symmetric Tridiagonal Matrices with Prescribed Determinant over Commutative Finite Chain Rings} \label{sec4}

In this section,  the analysis for the  finite field is extended to  CFCRs. 
The main results generalize those over finite fields: explicit formulas are derived 
for the number of nonsingular symmetric tridiagonal matrices with prescribed determinant, together with
closed expressions that depend only on the nilpotency index $e$ and the  cardinality $q$ of the residue field.

	Let $R$ be a CFCR, and let $\gamma$ denote a fixed generator of its maximal ideal. 
	Then the chain of ideals in $R$ has the canonical form  
	\[
	R \supsetneq \gamma R \supsetneq \gamma^2 R \supsetneq \cdots \supsetneq \gamma^{e-1} R \supsetneq \gamma^e R = \{0\},
	\]
	for some positive integer $e$.  
	The least positive integer $e$ satisfying $\gamma^e = 0$ is called the \emph{nilpotency index} of $R$.  
	Moreover, the quotient ring $R/\gamma R \cong \mathbb{F}_q$ is a finite field of order $q$ for some prime power $q$, and is referred to as the \emph{residue field} of $R$.  By a standard abuse of notation, we write $a + \gamma R \in \mathbb{F}_q$ for the canonical image of any element $a \in R$ under the natural projection $R \to R/\gamma R \cong \mathbb{F}_q$.

	The following lemma, adapted from \cite{H2001} and \cite{HLM2003}, summarizes several basic but useful properties of CFCRs that will be employed in this study.
	
	\begin{lemma}[{\cite{H2001} and \cite{HLM2003}}] \label{lem:propCR}
		Let $R$ be a CFCR with nilpotency index $e$ and residue field $\mathbb{F}_q$ for some prime power $q$.  
		Let $\gamma$ be a generator of the maximal ideal of $R$.  
		Then the following properties hold:
		\begin{enumerate}[$1)$]
			\item For all $0 \leq j \leq e$, the cardinality of the ideal $\gamma^j R$ is given by 
			\[
			|\gamma^j R| = q^{e-j}.
			\]
			\item The group of units of $R$ can be described as
			\[
			U(R) = \{a + b \, :\, a + \gamma R \in U(\mathbb{F}_q), \ b \in \gamma R \}.
			\]
			\item The cardinality of the unit group is 
			\[
			|U(R)| = (q-1) q^{e-1}.
			\]
			\item For each $0 \leq i \leq e$, the quotient ring $R/\gamma^i R$ is itself a CFCR with nilpotency index $i$ and residue field $\mathbb{F}_q$.
		\end{enumerate}
	\end{lemma}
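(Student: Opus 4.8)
The four assertions all flow from a single structural fact about the ideal filtration, so the plan is to isolate and prove that fact first and then harvest each item as a consequence. The engine is the claim that every successive layer $\gamma^j R/\gamma^{j+1}R$ is a one-dimensional vector space over the residue field $\mathbb{F}_q$, hence has exactly $q$ elements. To see this I would observe that $\gamma^j R$ is generated as an $R$-module by the single element $\gamma^j$, so $\gamma^j R/\gamma^{j+1}R$ is cyclic; since it is annihilated by $\gamma$, it is naturally a module over $R/\gamma R\cong\mathbb{F}_q$, and a cyclic $\mathbb{F}_q$-module is either $0$ or isomorphic to $\mathbb{F}_q$. To rule out the zero case I would invoke that the containments in the canonical chain are strict: because $e$ is the nilpotency index we have $\gamma^{e-1}\neq 0$, and for $j\le e-1$ the module $\gamma^j R$ is nonzero and finitely generated over the local ring $R$, so Nakayama's lemma forbids $\gamma^j R=\gamma\cdot\gamma^j R=\gamma^{j+1}R$; as $R$ is finite, finite generation is automatic and Nakayama is available with no extra hypotheses. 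This layer fact is the only genuinely substantive step, and I expect making it airtight—specifically justifying that no intermediate containment collapses—to be the main obstacle; everything else is bookkeeping.

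Granting the layer fact, item $1)$ is a telescoping count. I would run a downward induction using $|\gamma^j R|=|\gamma^j R/\gamma^{j+1}R|\cdot|\gamma^{j+1}R|=q\,|\gamma^{j+1}R|$, with base case $|\gamma^e R|=|\{0\}|=1=q^0$, to conclude $|\gamma^j R|=q^{e-j}$ for all $0\le j\le e$; in particular $|R|=q^e$ and $|\gamma R|=q^{e-1}$.

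For items $2)$ and $3)$ I would use that $R$ is local with unique maximal ideal $\gamma R$. In a commutative local ring the set of non-units is exactly the maximal ideal, so $U(R)=R\setminus\gamma R$; equivalently, $x\in U(R)$ if and only if its image $x+\gamma R$ is nonzero in the field $\mathbb{F}_q$, i.e. $x+\gamma R\in U(\mathbb{F}_q)$, which (writing $x=a+b$ with $b\in\gamma R$) is precisely the description in item $2)$. The count in item $3)$ is then immediate from item $1)$: $|U(R)|=|R|-|\gamma R|=q^e-q^{e-1}=(q-1)q^{e-1}$.

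Finally, for item $4)$ I would apply the correspondence theorem: the ideals of $R/\gamma^i R$ correspond to the ideals of $R$ containing $\gamma^i R$, which by the chain structure are exactly $\gamma^j R/\gamma^i R$ for $0\le j\le i$. These are totally ordered by inclusion, so $R/\gamma^i R$ is a chain ring whose maximal ideal is generated by $\bar\gamma=\gamma+\gamma^i R$. Its nilpotency index is exactly $i$, since $\bar\gamma^{\,i}=0$ while $\bar\gamma^{\,i-1}\neq 0$ because $\gamma^{i-1}\notin\gamma^i R$ by strictness of the chain; and the residue field is unchanged, as $(R/\gamma^i R)/(\gamma R/\gamma^i R)\cong R/\gamma R\cong\mathbb{F}_q$ by the third isomorphism theorem.
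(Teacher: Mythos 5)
Your proof is correct. Note that the paper itself supplies no argument for this lemma: it is stated as a citation to Hou and to Hou--Leung--Ma, so there is no in-paper proof to compare against. Your route is the standard one and is sound: the key layer fact that each quotient $\gamma^j R/\gamma^{j+1}R$ is a cyclic module over $R/\gamma R\cong\mathbb{F}_q$, nonzero for $j\le e-1$ by Nakayama (or, even more elementarily, because $\gamma^j=0$ would force $\gamma^{e-1}=\gamma^{e-1-j}\gamma^j=0$), gives item $1)$ by telescoping; items $2)$ and $3)$ follow from the locality of $R$ (non-units form the maximal ideal); and item $4)$ follows from the correspondence and third isomorphism theorems. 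The only point worth a word more is your assertion in item $4)$ that the ideals of $R$ containing $\gamma^i R$ are exactly the $\gamma^j R$ with $0\le j\le i$; this uses that every ideal of $R$ is a power of $\gamma R$, which the paper takes as known (the canonical chain $R\supsetneq\gamma R\supsetneq\cdots\supsetneq\gamma^e R=\{0\}$), and in any case total ordering of the ideals of the quotient already follows from the correspondence theorem alone. Only the degenerate case $i=0$, where $R/\gamma^0R$ is the zero ring, falls outside your argument, but that is an artifact of the statement rather than a gap in your proof.
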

	
Motivated by \cite[Proposition~3]{dHT2024}, we summarize below several foundational properties of the subgroup of  quadratic residues  in  commutative finite   chain rings.

	\begin{lemma} \label{lem:SUR}
		Let $q$ be a prime power. Let $R$ be a CFCR of nilpotency index $e$ and residue field $  \mathbb{F}_{q}$.  Let $\gamma$ be  a generator of the maximal ideal of $R$.       Then the following statements hold.
		\begin{enumerate}
			\item $Q(R)$ is a multiplicative subgroup  of  $U(R)$.
			\item  $Q(R)= \{a+ b\, :\, a+ \gamma R \in   Q(\mathbb{F}_q)  \text{ and } b\in \gamma R\}$.
			\item If $q$ is even, then  $Q(R)=U(R)$ and $|Q(R)|=  (q-1) q^{e-1}$. 
			\item If $q$ is odd, then $ |Q(R)|= |Q(\mathbb{F}_q)||\gamma R|= \dfrac{q-1}{2} q^{e-1}$ and  $ |N(R)| = \dfrac{q-1}{2} q^{e-1}$.
		\end{enumerate}
	\end{lemma}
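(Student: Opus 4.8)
The plan is to prove the structural description (2) first, since the subgroup property (1) and the two cardinality statements (3) and (4) then follow from it together with Lemma \ref{lem:SUF1} and Lemma \ref{lem:propCR}. I would dispatch (1) directly and independently. Because $R$ is commutative, the squaring map $\sigma\colon R\to R$, $\sigma(x)=x^2$, is multiplicative, and if $x^2\in U(R)$ then $x\in U(R)$, since $x\cdot\bigl(x(x^2)^{-1}\bigr)=1$ exhibits an inverse of $x$. Hence $Q(R)=\sigma(U(R))$ is the image of the abelian group $U(R)$ under the endomorphism $\sigma|_{U(R)}$; it contains $1=1^2$ and is closed under products, via $(x^2)(y^2)=(xy)^2$, and under inverses, via $(x^2)^{-1}=(x^{-1})^2$, so it is a subgroup of $U(R)$.

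For (2), let $\pi\colon R\to R/\gamma R\cong\mathbb{F}_q$ be the canonical projection, which by Lemma \ref{lem:propCR} restricts to a surjective group homomorphism $U(R)\to U(\mathbb{F}_q)$ with kernel $1+\gamma R$. The inclusion ``$\subseteq$'' is immediate: if $u=x^2\in Q(R)$, then $\pi(u)=\pi(x)^2\in Q(\mathbb{F}_q)$, so $u$ lies in the set on the right. For ``$\supseteq$'' it suffices to show that every $u\in U(R)$ with $\pi(u)\in Q(\mathbb{F}_q)$ is a square in $R$. Choosing a unit $c\in U(R)$ whose image squares to $\pi(u)$ and replacing $u$ by $c^{-2}u$, this reduces to the single assertion that every principal unit $v\in 1+\gamma R$ is a square.

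This surjectivity of $\sigma$ onto $1+\gamma R$ is the heart of the argument and the step I expect to be the main obstacle. When $q$ is odd, $2\in U(R)$, and I would solve $y^2=v$ by Hensel/Newton lifting along the chain $R\supsetneq\gamma R\supsetneq\cdots\supsetneq\gamma^eR=\{0\}$: starting from $y_0=1$ and correcting by $y_{k+1}=y_k-(y_k^2-v)(2y_k)^{-1}$, each step improves the approximation modulo a higher power of $\gamma$ because $2y_k\in U(R)$, and after finitely many steps $y^2=v$ holds exactly; this gives $Q(R)=\pi^{-1}(Q(\mathbb{F}_q))$. The even case is genuinely more delicate, since $2\in\gamma R$ makes the Newton step degenerate; here one must instead exploit that the residue field has characteristic $2$ and analyze the squaring map on $1+\gamma R$ directly through the filtration by the ideals $\gamma^kR$. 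This characteristic-$2$ lifting is where the real care is required.

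Granting (2), statements (3) and (4) are pure counting. By Lemma \ref{lem:propCR} every fiber of $\pi|_{U(R)}$ over a unit of $\mathbb{F}_q$ has size $|\gamma R|=q^{e-1}$, so $|Q(R)|=|Q(\mathbb{F}_q)|\,q^{e-1}$. If $q$ is even, Lemma \ref{lem:SUF1} gives $Q(\mathbb{F}_q)=U(\mathbb{F}_q)$ and $|Q(\mathbb{F}_q)|=q-1$, whence $Q(R)=U(R)$ and $|Q(R)|=(q-1)q^{e-1}$. If $q$ is odd, $|Q(\mathbb{F}_q)|=\tfrac{q-1}{2}$, so $|Q(R)|=\tfrac{q-1}{2}q^{e-1}$, and since $|U(R)|=(q-1)q^{e-1}$ by Lemma \ref{lem:propCR}, we obtain $|N(R)|=|U(R)|-|Q(R)|=\tfrac{q-1}{2}q^{e-1}$.
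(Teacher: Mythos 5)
The paper offers no argument for this lemma at all --- it is stated as a summary ``motivated by'' \cite{dHT2024} --- so there is no internal proof to compare yours against. Your part (1), your counting deductions in (3)--(4), and your treatment of the inclusion ``$\supseteq$'' in (2) for odd $q$ via Hensel/Newton lifting (which works precisely because $2\in U(R)$) are correct and complete, and that is surely the intended route in the odd case.

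The step you flag as needing ``real care'' --- surjectivity of squaring on $1+\gamma R$ when $q$ is even --- is not merely delicate: it is false whenever $e\ge 2$, and with it parts (2) and (3) of the lemma as stated. Since $q$ is even we have $2\in\gamma R$, so for $x\in\gamma R$ one gets $(1+x)^2=1+2x+x^2\in 1+\gamma^2R$; thus every square of a principal unit already lies in the proper subgroup $1+\gamma^2R$, and squaring on the $2$-group $1+\gamma R$ has nontrivial kernel (for instance $(1+\gamma^{e-1}t)^2=1$ once $e\ge 2$). Concretely, for $R=\mathbb{Z}_4$ or $R=\mathbb{F}_2[u]/(u^2)$ one computes $Q(R)=\{1\}$, whereas the lemma asserts $Q(R)=U(R)$ with $|Q(R)|=2$. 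So no amount of ``analysis through the filtration'' can close the gap you identified: the even case of (2)--(3) holds only for $e=1$, and in general $|Q(R)|=(q-1)\,\bigl|(1+\gamma R)^2\bigr|<(q-1)q^{e-1}$. This defect propagates to the later statements that invoke $Q(R)=U(R)$ for even $q$ (e.g.\ Lemma \ref{lem3.5}). For odd $q$, everything in your proposal is sound.
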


	\subsection{Enumeration of nonsingular  Symmetric Tridiagonal Matrices with Prescribed Determinant over CFCRs}
		Let $R$ be  a CFCR of nilpotency index $e$ and residue field $\mathbb{F}_q$.      It is well known that $R$ is a disjoint  union of $\{0\}$, $U(R)$, and $Z(R)$.   Let $\gamma$ be a generator of the maximal ideal of   $ R$, By Lemma \ref{lem:propCR}, an element $a\in R$ can be written as $a =\gamma^sb$   for some $0\leq s\leq e$ and     $b\in U(R)$.  Precisely, $a=0$ if $s=e$, $a =\gamma^sb \in  Z(R)$ if $1\leq s\leq e-1$, and $a\in U(R)$ if $s=0$.
	
	\subsubsection{nonsingular  Symmetric Tridiagonal Matrices over CFCRs}

    The enumeration for nonsingular case over a CFCR lifts multiplicatively from the residue field via an entry-wise reduction. 
    Let $\psi:R\to\mathbb{F}_q$ be  the reduction modulo $\gamma R$.  Then the map
	$
	\Psi:ST_n(R)\rightarrow ST_n(\mathbb{F}_q)$ defined by the entry-wise reduction   \begin{align}\label{eq:reduct} A=[a_{ij}] \mapsto  [\psi(a_{ij})] 
	\end{align}
	is a
	surjective additive group homomorphism. Moreover,  the 
 kernel of $\Psi$   consists of  the symmetric tridiagonal matrices all of whose
	$2n-1$ defining entries  are  in $\mathfrak \gamma R$. Hence,
	\begin{equation}\label{eq:ker-size}
		|\ker(\Psi)|=|\gamma R|^{2n-1}=q^{(e-1)(2n-1)},
	\end{equation}
    and hence,  $\Psi$ is a $q^{(e-1)(2n-1)}$-to-one map.

    By applying this reduction, we establish the following results. 

	\begin{theorem}\label{thm:ISTnR}
		Let $R$ be a CFCR with residue field $\mathbb{F}_q$  and nilpotency index $e$.
		Then for all $n\ge 1$,
		\[
		|IST_n(R)|=q^{(e-1)(2n-1)}\cdot |IST_n(\mathbb{F}_q)|,
		\]
		where $|IST_n(\mathbb{F}_q)|$ is given in Theorem~\textnormal{\ref{theITcal}}.
	\end{theorem}
	
	\begin{proof} Based on the homomorphism   $
	\Psi:ST_n(R)\rightarrow ST_n(\mathbb{F}_q)$ defined in \eqref{eq:reduct},   

We note that  $\det(\Psi( A))=\psi({\det(A)})$ which implies that  $\det(A)\in U(R)$ if and only if $\det(\Psi( A))\in U(\mathbb{F}_q)$.
	Therefore, the restriction  $\Psi {\big\vert}_{IST_n(R)} : IST_n(R)\to IST_n(\mathbb{F}_q)$     a surjective map and it is  $|\ker(\Psi)| = q^{(e-1)(2n-1)}$ to one. This completes the proof. 
	\end{proof}

	\begin{remark} The result and proof  of  Theorem \ref{thm:ISTnR} can be naturally generalized to the case of a {\em  commutative finite   local ring}, a  commutative finite     ring  with a unique maximal ideal. Let $\mathfrak{ R}$ be a    commutative finite   local ring with maximal ideal $\mathfrak{M}$ and residue field $\mathbb{F}_q=\mathfrak{ R}/\mathfrak{M}$.  Then  for each positive integer $n$,
		\[
		|IST_n(R)|= |\mathfrak{M}|\cdot |IST_n(\mathbb{F}_q)|,
		\]
		where $|IST_n(\mathbb{F}_q)|$ is given in Theorem~\textnormal{\ref{theITcal}}.
	\end{remark}

For a CFCR with residue field $\mathbb{F}_q$ of  even characteristic,    we have $Q(R)=U(R)$ by Lemma \ref{lem:SUR}.    Similar to Corollary \ref{cor2.9},  the next corollary  can be deduced immediately.

	\begin{lemma}  \label{lem3.5}Let $R$ be  a CFCR of nilpotency  index $e$ and residue field $\mathbb{F}_q$, and let  $n$ be a positive integer.   
	   If $q$ is even, then 
		\[|ST_n(R,a)|= 
		|ST_n(R,1)|  \]
		for all $  a\in U(R)$.   	
	\end{lemma}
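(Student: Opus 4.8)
The plan is to mimic the finite-field argument of Corollary~\ref{cor2.9}, replacing the field-theoretic input by its chain-ring analogue. The decisive structural fact is Lemma~\ref{lem:SUR}(3): when $q$ is even, every unit of $R$ is a square, so $Q(R)=U(R)$. Consequently, for any fixed $a\in U(R)$ one may write $a=r^2$ for some $r\in R$, and since $a$ is a unit, $r$ is necessarily a unit as well (if $r$ lay in the maximal ideal, then $r^2=a$ would too, contradicting $a\in U(R)$).

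First I would set $D={\rm Diag}\begin{pmatrix} 1&\dots&1&r\end{pmatrix}$ and reuse the congruence map $\alpha_r$ from \eqref{eq:alpr}, namely $\alpha_r(A)=D\,A\,D$. A direct entrywise computation $(DAD)_{ij}=d_i\,a_{ij}\,d_j$ shows that only the entries touching the last row or column are rescaled (explicitly $a_{nn}\mapsto r^2 a_{nn}$ and $a_{n-1,n}=a_{n,n-1}\mapsto r\,a_{n-1,n}$), so $\alpha_r$ preserves both the tridiagonal band and the symmetry $A=A^T$; hence $\alpha_r$ sends $ST_n(R)$ into $ST_n(R)$. This invariance is purely formal and holds over any commutative ring.

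Second, from $\det(\alpha_r(A))=\det(D)^2\det(A)=r^2\det(A)=a\,\det(A)$ it follows that $\alpha_r$ restricts to a map $\alpha_r:ST_n(R,1)\to ST_n(R,a)$. Because $r\in U(R)$, the matrix $D$ is invertible with $D^{-1}={\rm Diag}\begin{pmatrix} 1&\dots&1&r^{-1}\end{pmatrix}$, and two-sided congruence by $D^{-1}$ furnishes an explicit inverse $ST_n(R,a)\to ST_n(R,1)$. Thus $\alpha_r$ is a bijection, and therefore $|ST_n(R,a)|=|ST_n(R,1)|$ for every $a\in U(R)$.

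I do not anticipate a genuine obstacle: the argument is an almost verbatim transcription of the even-characteristic field case in Corollary~\ref{cor2.9}. The only point requiring care is to invoke Lemma~\ref{lem:SUR}(3) in place of Lemma~\ref{lem:SUF1}(1), so as to guarantee simultaneously that $a$ admits a square root and that this square root is a \emph{unit} of $R$ (which is what makes $D$ invertible and the congruence reversible). Everything else---preservation of the symmetric tridiagonal shape under diagonal congruence and multiplicativity of the determinant---is automatic.
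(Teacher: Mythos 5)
Your proposal is correct and follows essentially the same route as the paper: the paper deduces the lemma "immediately" from Lemma~\ref{lem:SUR} (giving $Q(R)=U(R)$ for even $q$) together with the congruence bijection $\alpha_r$ of \eqref{eq:alpr}, exactly as you do. Your added observation that the square root $r$ of a unit $a$ is itself a unit (so that $D$ is invertible) is a detail the paper leaves implicit, and it is correctly handled.
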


	By Theorem \ref{thm:ISTnR} and Lemma \ref{lem3.5}, we have the following enumeration. 
		\begin{corollary}  \label{cor2.10}
		Let $R$ be  a CFCR of nilpotency  index $e$ and residue field $\mathbb{F}_q$,  and let  $n$ be a positive integer.  If  $q$ is even, then
		\begin{align*}
			|ST_n(R,a)|
		=\frac{|IST_n(R)|}{(q-1)q^{e-1}}.  
		\end{align*} 
	\end{corollary}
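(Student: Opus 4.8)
The plan is to count $IST_n(R)$ by stratifying it according to the unit determinant value and then exploiting the uniformity supplied by Lemma~\ref{lem3.5}. The argument runs parallel to the even-characteristic finite-field computation, with the unit group $U(R)$ playing the role of $U(\mathbb{F}_q)$.

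First I would record that a matrix in $ST_n(R)$ is nonsingular precisely when its determinant lies in $U(R)$, so the fibers $ST_n(R,a)$ with $a\in U(R)$ partition $IST_n(R)$:
\[
IST_n(R)=\bigsqcup_{a\in U(R)} ST_n(R,a).
\]
Taking cardinalities gives $|IST_n(R)|=\sum_{a\in U(R)}|ST_n(R,a)|$.

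Next, since $q$ is even, Lemma~\ref{lem:SUR} guarantees $Q(R)=U(R)$, i.e.\ every unit of $R$ is a quadratic residue; this is exactly the hypothesis underlying Lemma~\ref{lem3.5}, so each summand equals the single value $|ST_n(R,1)|$. The sum therefore collapses to $|U(R)|\cdot|ST_n(R,1)|$, and substituting $|U(R)|=(q-1)q^{e-1}$ from Lemma~\ref{lem:propCR} yields
\[
|IST_n(R)|=(q-1)q^{e-1}\,|ST_n(R,1)|.
\]
Solving for $|ST_n(R,1)|$ and invoking Lemma~\ref{lem3.5} once more to replace it by $|ST_n(R,a)|$ for an arbitrary $a\in U(R)$ gives the asserted identity. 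For a fully closed expression one then substitutes $|IST_n(R)|=q^{(e-1)(2n-1)}|IST_n(\mathbb{F}_q)|$ from Theorem~\ref{thm:ISTnR} together with the explicit formula of Theorem~\ref{theITcal}.

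There is no genuine obstacle in this corollary: the substantive content has already been discharged in establishing the uniformity Lemma~\ref{lem3.5} (which itself rests on $Q(R)=U(R)$ in even characteristic) and the lifting Theorem~\ref{thm:ISTnR}. The only point demanding care is the bookkeeping of the disjoint union and confirming that it is precisely the even-characteristic hypothesis that forces all unit-determinant counts to coincide; the remainder is a single division.
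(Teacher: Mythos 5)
Your proposal is correct and matches the paper's (implicitly indicated) argument: partition $IST_n(R)$ by determinant value over $U(R)$, use Lemma~\ref{lem3.5} to see all fibers have equal size in even characteristic, and divide by $|U(R)|=(q-1)q^{e-1}$. The paper cites Theorem~\ref{thm:ISTnR} and Lemma~\ref{lem3.5} without writing out the details, and your write-up supplies exactly the intended bookkeeping.
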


	\begin{theorem} \label{lem:sq-class-inv}
	
	Let $R$ be  a CFCR of nilpotency  index $e$ and let  $n$ be a positive integer.   
	Let $a\in U(R)$   and $b\in N(R) $.   Then
	\[|ST_n(R,a\gamma^s )|=\begin{cases}
		|ST_n(R,\gamma^s )| &\text{ if }  a\in Q(R),\\
		|ST_n(R,\gamma^s b)| &\text{ if } a\in N(R) 
	\end{cases}\] 
	for all $s\in \{0,1,\dots,e\}$. 
	\end{theorem}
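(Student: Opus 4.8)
The plan is to reduce this to the same congruence-transformation argument used for finite fields in Theorem~\ref{Thm:a=b}, now carrying the extra factor $\gamma^s$ along unchanged. The central tool is the map $C_r : ST_n(R) \to ST_n(R)$ defined, for a unit $r \in U(R)$, by $C_r(A) = {\rm Diag}\begin{pmatrix} 1 & \dots & 1 & r \end{pmatrix} A \, {\rm Diag}\begin{pmatrix} 1 & \dots & 1 & r \end{pmatrix}$. First I would verify that $C_r$ sends symmetric tridiagonal matrices to symmetric tridiagonal matrices: conjugating by $D_r = {\rm Diag}\begin{pmatrix} 1 & \dots & 1 & r \end{pmatrix}$ rescales only the $(n,n)$ entry by $r^2$ and the symmetric pair of $(n-1,n)$ and $(n,n-1)$ entries by $r$, leaving the remaining band entries fixed, so the output is again symmetric tridiagonal. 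Since $\det(D_r)=r$, multiplicativity of the determinant gives $\det(C_r(A)) = r^2\det(A)$, and because $r$ is a unit, $C_r$ is a bijection with inverse $C_{r^{-1}}$.

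Next I would split into the two cases of the statement. When $a \in Q(R)$, I write $a = r^2$; since $a$ is a unit its square root $r$ is necessarily a unit, so $C_r$ is available, and it restricts to a bijection $ST_n(R,\gamma^s) \to ST_n(R, a\gamma^s)$, because $\det(A) = \gamma^s$ forces $\det(C_r(A)) = r^2\gamma^s = a\gamma^s$. This yields $|ST_n(R,\gamma^s)| = |ST_n(R,a\gamma^s)|$. When $a \in N(R)$ the residue field must have odd characteristic (otherwise $N(R)=\emptyset$ by Lemma~\ref{lem:SUR} and the case is vacuous). Here I use that $Q(R)$ is an index-two subgroup of $U(R)$---a subgroup by Lemma~\ref{lem:SUR}(1), of index two by the cardinality count in Lemma~\ref{lem:SUR}(4)---so $N(R)$ is its unique nontrivial coset. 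Consequently $a^{-1}b \in Q(R)$ for the fixed $b \in N(R)$; writing $a^{-1}b = r^2$ with $r$ a unit, the map $C_r$ restricts to a bijection $ST_n(R,a\gamma^s) \to ST_n(R, b\gamma^s)$, since $\det(A)=a\gamma^s$ gives $\det(C_r(A)) = r^2 a\gamma^s = b\gamma^s$. Thus $|ST_n(R,a\gamma^s)| = |ST_n(R,\gamma^s b)|$ for all $s \in \{0,1,\dots,e\}$.

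The routine verifications---structure preservation under $C_r$ and the determinant scaling---are immediate. The only point requiring genuine care is the coset argument in the nonresidue case: concluding $a^{-1}b \in Q(R)$ rests on $Q(R)$ being a subgroup of index exactly two in $U(R)$, which I would justify explicitly by combining parts (1) and (4) of Lemma~\ref{lem:SUR}. One should also note that $\gamma^s$ may be a zero divisor (or zero when $s=e$), but this causes no difficulty: multiplication of the determinant value by the \emph{unit} $r^2$ merely permutes the target determinant, so the fibers $ST_n(R,\cdot)$ are matched bijectively regardless of whether $\gamma^s$ is invertible.
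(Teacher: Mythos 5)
Your proposal is correct and follows essentially the same route as the paper: the paper's proof simply states that the finite-field argument of Theorem~\ref{Thm:a=b} (the two-sided scaling by ${\rm Diag}\begin{pmatrix}1&\dots&1&r\end{pmatrix}$, which multiplies the determinant by $r^2$) extends verbatim, with Lemma~\ref{lem:SUR} replacing Lemma~\ref{lem:SUF1} to supply the index-two coset structure of $Q(R)$ in $U(R)$. Your write-up just makes explicit the details the paper leaves implicit, including the harmless observation that $\gamma^s$ need not be a unit.
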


	\begin{proof}  The proof can be extended naturally  from  the case of finite fields in Theorem \ref{Thm:a=b}, where   
 Lemma~\ref{lem:SUR}   is applied instead of 
Lemma~\ref{lem:SUF1}.
	\end{proof}

 Let $R$ be a   CFCR with nilpotency index $e$,  maximal ideal
 $\gamma R$, and residue field $R/\gamma R\cong\mathbb{F}_q$.
  Let $\chi$ be the quadratic character on $\mathbb{F}_q$.  Using the reduction   $\psi:R\to\mathbb{F}_q$,  let  $\eta_R:=\chi\circ\psi$.   By Lemma~\ref{lem:SUR}, we have  $u\in Q(R)$   if and only if $\psi(u) \in Q(\mathbb{F}_q)$.    Consequently,

 \[ 
\eta_R(a)=
 \begin{cases}
 	1 & \text{ if } a\in Q(R),\\
    -1 & \text{ if } a\in N(R),\\
 	0, & \text{ if } a\notin U(R).
 \end{cases}
 \]
 
 For each integer $n\ge1$,  let 
 \[
 S_n(R) := \sum_{a\in U(R)}\eta_R(a) |ST_n(R,a)|
  = \sum_{A\in ST_n(R)}\eta_R(\det (A)).
 \]
 Then we have the following recursive formula for  $ S_n(R)$.
 \begin{lemma} \label{lem:lift-Sn} Let $q$ be an odd prime power and let $n$ be  a positive integer.  
 	Let $R$ be a CFCR with  nilpotency index $e$ and residue field $\mathbb{F}_q$. Then 
 	\[
 	S_n(R) = q^{(e-1)(2n-1)} S_n(\mathbb{F}_q),
 	\] where  $S_n(\mathbb{F}_q)$ is determined in Lemma \ref{lem:2.6}.
 \end{lemma}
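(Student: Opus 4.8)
The plan is to prove this by pushing the entire sum $S_n(R)$ down to the residue field through the entry-wise reduction map $\Psi:ST_n(R)\to ST_n(\mathbb{F}_q)$ of \eqref{eq:reduct}, exploiting two facts already at our disposal: that $\Psi$ is surjective and exactly $q^{(e-1)(2n-1)}$-to-one by \eqref{eq:ker-size}, and that the determinant commutes with the ring reduction $\psi$. I would work throughout with the \emph{matrix-sum} form
\[
S_n(R)=\sum_{A\in ST_n(R)}\eta_R(\det(A)),
\]
rather than the unit-weighted form, since the matrix-sum form automatically handles singular $A$ (where $\eta_R(\det(A))=0$) and is the shape that interacts cleanly with fibers of $\Psi$.

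The key compatibility step is the identity $\eta_R(\det(A))=\chi\bigl(\det(\Psi(A))\bigr)$ for every $A\in ST_n(R)$. First I would observe that because $\psi:R\to\mathbb{F}_q$ is a ring homomorphism, it commutes with the determinant, so $\psi(\det(A))=\det(\Psi(A))$. Combining this with the definition $\eta_R=\chi\circ\psi$ gives
\[
\eta_R(\det(A))=\chi(\psi(\det(A)))=\chi(\det(\Psi(A))),
\]
and I would note that this holds uniformly, including on singular matrices, since $\chi(0)=0$ matches $\eta_R(a)=0$ for $a\notin U(R)$ (this is exactly the trichotomy displayed for $\eta_R$ before the statement, which rests on Lemma~\ref{lem:SUR}). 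In particular, the summand $\eta_R(\det(A))$ depends on $A$ only through its reduction $\Psi(A)$, so it is constant on each fiber of $\Psi$.

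With this in hand the computation collapses: I would partition $ST_n(R)$ into the fibers $\Psi^{-1}(B)$ for $B\in ST_n(\mathbb{F}_q)$, each of cardinality $q^{(e-1)(2n-1)}$ by \eqref{eq:ker-size}, and on each fiber replace $\eta_R(\det(A))$ by the constant value $\chi(\det(B))$. This yields
\[
S_n(R)=\sum_{B\in ST_n(\mathbb{F}_q)}\;\sum_{A\in\Psi^{-1}(B)}\chi(\det(B))
=q^{(e-1)(2n-1)}\sum_{B\in ST_n(\mathbb{F}_q)}\chi(\det(B))
=q^{(e-1)(2n-1)}\,S_n(\mathbb{F}_q),
\]
which is the claim. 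There is no genuine obstacle here; the only point requiring care is the determinant–reduction compatibility $\psi\circ\det=\det\circ\Psi$ together with the observation that $\eta_R$ factors through $\psi$, so that the weight is truly fiber-constant. Once those are verified, the fiber count from \eqref{eq:ker-size} does the rest, and the explicit value of $S_n(\mathbb{F}_q)$ is supplied by Lemma~\ref{lem:2.6}.
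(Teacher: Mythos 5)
Your proposal is correct and follows essentially the same route as the paper's proof: both partition $ST_n(R)$ into the fibers of the entry-wise reduction $\Psi$, use the fiber size $q^{(e-1)(2n-1)}$ from \eqref{eq:ker-size}, and replace $\eta_R(\det(A))$ by $\chi(\det(\Psi(A)))$ on each fiber. Your explicit verification that $\eta_R\circ\det=\chi\circ\det\circ\Psi$ (including the singular case) is a point the paper leaves implicit, but the argument is the same.
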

 
 \begin{proof}
 Based on $\Psi$ defined in \eqref{eq:reduct},  we write $\Psi^{-1}(B)$ for the inverse image of $B\in ST_n(\mathbb{F}_q) $.  Since $\Psi$ is    a $q^{(e-1)(2n-1)}$-to-one map,  $|\Psi^{-1}(B)|=q^{(e-1)(2n-1)}$ for all $B\in ST_n(\mathbb{F}_q) $.  It follows that 
 	\[
 	\begin{aligned}
 		S_n(R)
 		&=\sum_{A\in ST_n(R)}\eta_R(\det (A))\\
        &
 		=\sum_{\Psi(A)\in ST_n(\mathbb{F}_q)} \sum_{A\in \Psi^{-1}(\Psi(A))}\eta_R(\det (A))\\
        &=\sum_{\Psi(A)\in ST_n(\mathbb{F}_q)}  |\Psi^{-1}(\Psi(A))|  \chi(\det\Psi(A))\\
 		&=\sum_{\Psi(A)\in ST_n(\mathbb{F}_q)} q^{(e-1)(2n-1)} \chi(\det\Psi(A))\\
        &
 		= q^{(e-1)(2n-1)} \sum_{\Psi(A)\in ST_n(\mathbb{F}_q)} \chi(\det\Psi(A))\\
 		&= q^{(e-1)(2n-1)} S_n(\mathbb{F}_q)
 	\end{aligned}
 	\]
 as desired.
 \end{proof}

By considering the residue class of $q$ modulo $4$, the following specialization of
Lemma~\ref{lem:lift-Sn} and Lemma~\ref{lem:2.6} is obtained.
 \begin{corollary} \label{cor:SnR-qmod4}
Let $q$ be an odd prime power and let $m$ be  a positive integer.  
 	Let $R$ be a CFCR with  nilpotency index $e$ and residue field $\mathbb{F}_q$. Then,
\[
S_{2m-1}(R)=0 \quad 
\text{ and } \quad 
S_{2m}(R)=q^{(4m-1)(e-1)}\bigl(q^2(q-1)\chi(-1)\bigr)^{m-1}S_2(\mathbb{F}_q),
\]
where $S_2(\mathbb{F}_q)=(q-1)\big((2q-1)\chi(-1)-(q-1)\big)$. 
Equivalently, the closed forms are:
\begin{enumerate}
\item If $q\equiv 1\pmod 4$, then
\[
S_{2m}(R)
= q^{e(4m-1)-2m} (q-1)^m.
\]
\item If $q\equiv 3\pmod 4$, then
\[
S_{2m}(R) 
= (-1)^m (3q-2) q^{e(4m-1)-2m-1} (q-1)^m.
\]
\end{enumerate}
\end{corollary}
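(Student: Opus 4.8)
The plan is to combine the lifting identity of Lemma~\ref{lem:lift-Sn} with the finite-field evaluations already recorded in Lemma~\ref{lem:2.6} and Corollary~\ref{cor:2.7}, and then to specialize $\chi(-1)$ according to the residue of $q$ modulo $4$ via Lemma~\ref{lem:SUF2}. First I would invoke Lemma~\ref{lem:lift-Sn} to write $S_n(R)=q^{(e-1)(2n-1)}S_n(\mathbb{F}_q)$, so that every statement about $S_n(R)$ reduces to a statement about $S_n(\mathbb{F}_q)$ multiplied by an explicit power of $q$. For the odd index $n=2m-1$, Corollary~\ref{cor:2.7} gives $S_{2m-1}(\mathbb{F}_q)=0$, and hence $S_{2m-1}(R)=0$ at once. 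For the even index $n=2m$, the same corollary yields $S_{2m}(\mathbb{F}_q)=\bigl(q^2(q-1)\chi(-1)\bigr)^{m-1}S_2(\mathbb{F}_q)$; substituting $n=2m$ into the exponent $(e-1)(2n-1)=(e-1)(4m-1)$ then produces exactly the first displayed formula.

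To obtain the two closed forms I would split on $q\bmod 4$ using Lemma~\ref{lem:SUF2}, which determines $\chi(-1)$. When $q\equiv1\pmod4$ one has $\chi(-1)=1$, so $S_2(\mathbb{F}_q)=(q-1)\bigl((2q-1)-(q-1)\bigr)=q(q-1)$ and the factor $\bigl(q^2(q-1)\bigr)^{m-1}$ is positive; collecting the powers of $q$ gives exponent $(e-1)(4m-1)+2(m-1)+1=e(4m-1)-2m$ and the power of $(q-1)$ becomes $(m-1)+1=m$, which is the first closed form. When $q\equiv3\pmod4$ one has $\chi(-1)=-1$, so $S_2(\mathbb{F}_q)=-(q-1)(3q-2)$ and the factor $\bigl(-q^2(q-1)\bigr)^{m-1}$ contributes a sign $(-1)^{m-1}$; multiplying by the sign of $S_2(\mathbb{F}_q)$ yields $(-1)^m$, the power of $q$ becomes $(e-1)(4m-1)+2(m-1)=e(4m-1)-2m-1$, the power of $(q-1)$ is again $m$, and the factor $3q-2$ persists, giving the second closed form.

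No genuine obstacle is expected: the result is a direct specialization, and the only care required is the arithmetic of exponents. The most error-prone step is the consolidation of the powers of $q$ in the $q\equiv3\pmod4$ case, where one must correctly track the contribution $q^{2(m-1)}$ coming from $\bigl(q^2(q-1)\chi(-1)\bigr)^{m-1}$ together with the lifted factor $q^{(e-1)(4m-1)}$, and verify that $S_2(\mathbb{F}_q)=-(q-1)(3q-2)$ introduces no further power of $q$ while supplying the final sign and the factor $3q-2$.
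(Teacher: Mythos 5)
Your proposal is correct and follows essentially the same route as the paper: lift via Lemma~\ref{lem:lift-Sn}, substitute the finite-field evaluation from Corollary~\ref{cor:2.7}, and then specialize $\chi(-1)$ by $q\bmod 4$ (the paper delegates this last step to Corollary~\ref{lem:S2-and-rec-mod4}, which is itself just Lemma~\ref{lem:SUF2} applied as you do). Your exponent bookkeeping in both congruence cases checks out.
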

\begin{proof} From Lemma~\ref{lem:lift-Sn},  we have 
\begin{equation}\label{eq:lift}
 S_{2m-1}(R)=0 \quad 
\text{ and }  \quad S_{2m}(R)=q^{(4m-1)(e-1)}S_{2m}(\mathbb{F}_q).
\end{equation}
By Corollary~\ref{cor:2.7},  we have 
\[ 
S_2(\mathbb{F}_q)=(q-1)\bigl((2q-1)\chi(-1)-(q-1)\bigr)
\]
and
\begin{equation}\label{eq:field-closed}
S_{2m}(\mathbb{F}_q)=\bigl(q^2(q-1)\chi(-1)\bigr)^{m-1}S_2(\mathbb{F}_q).
\end{equation}
Combining \eqref{eq:lift} and \eqref{eq:field-closed} yields
\begin{equation}\label{eq:SnR-even-general}
S_{2m}(R)=q^{(4m-1)(e-1)}\bigl(q^2(q-1)\chi(-1)\bigr)^{m-1}S_2(\mathbb{F}_q).
\end{equation}
 The remaining parts follow from Corollary \ref{lem:S2-and-rec-mod4}.
\end{proof}

	\begin{theorem}\label{thm:split-formula-ring} Let $q$ be an odd prime power and let $n$ be a positive integer. 
		Let $R$ be a CFCR with residue field $\mathbb{F}_q$ and nilpotency index $e$.   Let  $a\in U(R)$.  Then 
		\[
		|ST_n(R,a)|=
		\begin{cases}
			\dfrac{|IST_n(R)|+S_n(R)}{(q-1)q^{e-1}} & \text{ if }a\in Q(R),\\[10pt]
			\dfrac{|IST_n(R)|-S_n(R)}{(q-1)q^{e-1}}  & \text{ if } a\notin Q(R).
		\end{cases}
		\]
		In particular, if $n$ is odd, then $S_n(R)=0$ and
		\[
		|ST_n(R,a)|=\dfrac{|IST_n(R)|}{(q-1)q^{e-1}},
		\] 
        for all $a\in U(R)$.
	\end{theorem}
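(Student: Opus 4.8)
The plan is to mirror the finite-field argument of Theorem \ref{thm:STna}, replacing the field-level ingredients with their CFCR analogues established earlier in this section. The starting point is the disjoint-union decomposition of the nonsingular set according to the quadratic-residue class of the determinant, namely
\begin{equation*}
IST_n(R)=\bigcup_{u\in U(R)}ST_n(R,u)=\Bigl(\bigcup_{u\in Q(R)}ST_n(R,u)\Bigr)\cup\Bigl(\bigcup_{u\in N(R)}ST_n(R,u)\Bigr).
\end{equation*}
Taking cardinalities gives $|IST_n(R)|=\sum_{u\in Q(R)}|ST_n(R,u)|+\sum_{u\in N(R)}|ST_n(R,u)|$. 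In parallel, unwinding the definition of $S_n(R)=\sum_{a\in U(R)}\eta_R(a)|ST_n(R,a)|$ together with the explicit values of $\eta_R$ yields $S_n(R)=\sum_{u\in Q(R)}|ST_n(R,u)|-\sum_{u\in N(R)}|ST_n(R,u)|$. These two equations form a linear system in the two partial sums, which I would solve to obtain $2\sum_{u\in Q(R)}|ST_n(R,u)|=|IST_n(R)|+S_n(R)$ and $2\sum_{u\in N(R)}|ST_n(R,u)|=|IST_n(R)|-S_n(R)$.

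Next I would collapse each partial sum to a single multiple of $|ST_n(R,a)|$. This is precisely where Theorem \ref{lem:sq-class-inv} (taken at $s=0$) enters: for any fixed $a\in Q(R)$ we have $|ST_n(R,u)|=|ST_n(R,a)|$ for every $u\in Q(R)$, and likewise for the nonresidue class. Using the cardinalities $|Q(R)|=|N(R)|=\tfrac{q-1}{2}q^{e-1}$ from Lemma \ref{lem:SUR}, the residue-class sum becomes $\sum_{u\in Q(R)}|ST_n(R,u)|=\tfrac{q-1}{2}q^{e-1}\,|ST_n(R,a)|$, and similarly for $N(R)$. Substituting into the linear-system solutions and dividing through gives exactly
\begin{equation*}
|ST_n(R,a)|=\frac{|IST_n(R)|\pm S_n(R)}{(q-1)q^{e-1}},
\end{equation*}
with the plus sign when $a\in Q(R)$ and the minus sign when $a\in N(R)=U(R)\setminus Q(R)$, which is the claimed dichotomy.

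For the final assertion, I would invoke the parity result from Corollary \ref{cor:SnR-qmod4} (equivalently the lift in Lemma \ref{lem:lift-Sn} combined with Corollary \ref{cor:2.7}): when $n$ is odd, $S_n(R)=q^{(e-1)(2n-1)}S_n(\mathbb{F}_q)=0$, so both branches coincide and $|ST_n(R,a)|=|IST_n(R)|/\bigl((q-1)q^{e-1}\bigr)$ for every $a\in U(R)$. The argument is essentially a verbatim lift of the finite-field proof, so I do not anticipate a genuine obstacle; the only point demanding care is ensuring that the class-invariance input really is available at the unit level $s=0$ and that the counts $|Q(R)|$ and $|N(R)|$ are equal, both of which are guaranteed by Lemma \ref{lem:SUR} in the odd-$q$ case. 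One should also note in passing that $U(R)$ is the disjoint union $Q(R)\sqcup N(R)$ (part (1) of Lemma \ref{lem:SUR} makes $Q(R)$ a subgroup, so its complement in $U(R)$ is exactly $N(R)$), which legitimizes the two-class partition underlying the whole computation.
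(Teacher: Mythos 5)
Your proposal is correct and follows exactly the route the paper intends: the paper's own proof is a one-line remark that the argument of Theorem \ref{thm:STna} generalizes with Theorem \ref{lem:sq-class-inv} (at $s=0$) replacing Theorem \ref{Thm:a=b}, and your write-up simply fills in those details — the two-class partition of $IST_n(R)$, the linear system in the partial sums, the collapse via class invariance and $|Q(R)|=|N(R)|=\tfrac{q-1}{2}q^{e-1}$, and the vanishing of $S_n(R)$ for odd $n$ via Lemma \ref{lem:lift-Sn}. No gaps; this is essentially the same proof, stated more explicitly than the paper does.
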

	
	\begin{proof} The proof can be generalized naturally  from  Theorem \ref{thm:STna} while Theorem \ref{lem:sq-class-inv} with $s=0$ is applied instead of Theorem  \ref{Thm:a=b}. 
		 
	\end{proof}

	\subsection{Enumeration of Singular Symmetric Tridiagonal Matrices with Prescribed Determinant over Commutative Finite Chain Rings}

In this subsection,   a layered technique  for counting symmetric tridiagonal matrices with a
prescribed determinant is developed by considering  the {determinant
layer} in which $\det(A)$ lies: the ideal layers $\gamma^sR$ ($0\le s\le e$) and their
\emph{punctured} counterparts $\gamma^sU(R)=\gamma^sR\setminus\gamma^{s+1}R$ when $R$ is a   CFCR with maximal ideal $\gamma R$ and 
nilpotency index $e$.  The entry-wise reduction
$\psi: R\to R/\gamma^tR$ yields   the layer counting over $R$ in
terms of  $|ST_n(R/\gamma^tR,0)|$ on the quotient.  This leads to explicit
formulas for
\[
\bigl|\{A:\det(A)\in\gamma^sR\}\bigr| \quad  \text{ and } \quad
\bigl|\{A:\det(A)\in\gamma^sU(R)\}\bigr|.
\]
Further, a complete determination of $ |ST_n(R,a)| $ is obtained for  $a=0$  and 
$ a=\gamma^{s}b \in R $ with $1\le s\le e-1$ and $b\in U(R)$.
By Theorem~\ref{lem:sq-class-inv}, the enumeration bifurcates according to the square
class of the unit factor $b$: the case $b\in Q(R)$   and the case
$b\in N(R)$   lead to distinct enumeration.
The two situations will therefore be presented separately and they are sufficient to given in terms of  $ |ST_n(R,\gamma^{s})| $  and $ |ST_n(R,\gamma^{s} b)| $ with $b\in N(R)$.

\subsubsection{Layered Enumeration for Determinants}

We note that for $t\in\{1,\dots,e\}$, the quotient $R/\gamma^t R$ has cardinality $|R/\gamma^t R|=q^t$,
and the set of $n\times n$ symmetric tridiagonal matrices over $R/\gamma^tR$ has size
$|ST_n(R/\gamma^tR)|=q^{t(2n-1)}$. The  layered enumeration  for the determinants is given in the next theorem.

\begin{theorem}\label{thm:ideal-and-punctured-layer}
Let $R$ be a commutative finite chain ring with maximal ideal $\gamma R$, nilpotency index $e$,
and residue field $R/\gamma R\cong\mathbb{F}_q$. Let  $n$ be a positive integer. Then the following statements hold.

 \begin{enumerate}
\item 
For each $s\in\{1,\dots,e\}$,
\[
\bigl|\{\,A\in ST_n(R)\;:\;\det(A)\in \gamma^s R\,\}\bigr|
=q^{(e-s)(2n-1)}\bigl|ST_n(R/\gamma^sR,\,0)\bigr|.
\]
In particular, if $s=0$, then  $\gamma^0R=R$ and 
\[
\bigl|\{\,A\in ST_n(R)\;:\;\det(A)\in R\,\}\bigr|=\bigl|ST_n(R)\bigr|=q^{e(2n-1)}.
\]

\item 
For  each $s\in\{1,\dots,e-1\}$,
\begin{align*}
\bigl|\{\,A\in ST_n(R)\;:\;\det(A)\in \gamma^s U(R)\,\}\bigr|
=&q^{(e-s)(2n-1)}\bigl|ST_n(R/\gamma^sR,\,0)\bigr|
\\
&-q^{(e-s-1)(2n-1)}\bigl|ST_n(R/\gamma^{s+1}R,\,0)\bigr|.
\end{align*}
In particular, if  $s=0$,  then $\gamma^0U(R)=U(R)$ and
\[
\bigl|\{\,A\in ST_n(R)\;:\;\det(A)\in U(R)\,\}\bigr|
=|IST_n(R)|=q^{(e-1)(2n-1)}|IST_n(\mathbb{F}_q)|.
\]
     
 \end{enumerate}
\end{theorem}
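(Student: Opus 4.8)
The plan is to generalize the entry-wise reduction used in the proof of Theorem~\ref{thm:ISTnR} from the residue field to the intermediate quotients $R/\gamma^t R$. For each $t\in\{1,\dots,e\}$ I would let $\psi_t\colon R\to R/\gamma^t R$ be the canonical projection and let $\Psi_t\colon ST_n(R)\to ST_n(R/\gamma^t R)$ denote the induced entry-wise reduction $[a_{ij}]\mapsto[\psi_t(a_{ij})]$. Exactly as in \eqref{eq:reduct}--\eqref{eq:ker-size}, $\Psi_t$ is a surjective additive group homomorphism whose kernel consists of the symmetric tridiagonal matrices all of whose $2n-1$ defining entries lie in $\gamma^t R$; since $|\gamma^t R|=q^{e-t}$ by Lemma~\ref{lem:propCR}, this gives $|\ker(\Psi_t)|=q^{(e-t)(2n-1)}$, so that $\Psi_t$ is exactly $q^{(e-t)(2n-1)}$-to-one. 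I would also recall from Lemma~\ref{lem:propCR} that $R/\gamma^t R$ is again a CFCR with residue field $\mathbb{F}_q$ and cardinality $q^t$, so that the symbol $ST_n(R/\gamma^t R,0)$ appearing on the right-hand side is well defined.

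The decisive observation is that the determinant is a polynomial in the matrix entries with integer coefficients, hence commutes with every ring homomorphism; in particular $\det(\Psi_t(A))=\psi_t(\det(A))$ for all $A\in ST_n(R)$. Because $\gamma^t R$ is precisely the kernel of $\psi_t$, this yields the equivalence $\det(A)\in\gamma^t R \iff \det(\Psi_t(A))=0$. To prove statement~(1), I would fix $s\in\{1,\dots,e\}$ and apply this with $t=s$: the set $\{A\in ST_n(R):\det(A)\in\gamma^sR\}$ is then exactly the full preimage $\Psi_s^{-1}(ST_n(R/\gamma^sR,0))$, and counting its fibers gives the claimed value $q^{(e-s)(2n-1)}|ST_n(R/\gamma^sR,0)|$. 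The boundary case $s=0$ is immediate, since $\gamma^0R=R$ makes the determinant condition hold for every matrix, and $|ST_n(R)|=|R|^{2n-1}=q^{e(2n-1)}$ by Proposition~\ref{prop:TnR}.

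For statement~(2) the key identity is $\gamma^sU(R)=\gamma^sR\setminus\gamma^{s+1}R$, reflecting that an element has valuation exactly $s$ precisely when it lies in $\gamma^sR$ but not in $\gamma^{s+1}R$. As $\gamma^{s+1}R\subseteq\gamma^sR$, the count over the punctured layer is the difference of the two ideal-layer counts, and substituting the formula from~(1) at levels $s$ and $s+1$ produces the stated expression. The remaining boundary case $s=0$ simply records $\gamma^0U(R)=U(R)$, so that $\{A:\det(A)\in U(R)\}=IST_n(R)$, whose cardinality is $q^{(e-1)(2n-1)}|IST_n(\mathbb{F}_q)|$ by Theorem~\ref{thm:ISTnR}.

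I do not expect a genuine obstacle, since every ingredient is a mild extension of the $t=1$ construction already in hand. The one step meriting care is verifying that $\Psi_t$ is uniformly $q^{(e-t)(2n-1)}$-to-one for all $t$, i.e.\ that every nonempty fiber has the common size $|\ker(\Psi_t)|$; this follows because $\Psi_t$ is a surjective homomorphism of additive groups, so its fibers are cosets of the kernel. Equally, the naturality identity $\det(\Psi_t(A))=\psi_t(\det A)$ should be stated explicitly, but it is the standard compatibility of the determinant with ring homomorphisms and requires no computation.
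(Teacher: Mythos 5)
Your proposal is correct and follows essentially the same route as the paper: entry-wise reduction $\Psi_s$ to $ST_n(R/\gamma^sR)$ with kernel of size $q^{(e-s)(2n-1)}$, the compatibility $\det(\Psi_s(A))=\psi_s(\det A)$ to identify the ideal layer with the preimage of the zero-determinant set, and the set difference $\gamma^sU(R)=\gamma^sR\setminus\gamma^{s+1}R$ for the punctured layer. No gaps.
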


\begin{proof}
Let  $s\in\{1,\dots,e\}$. We note the  the entry-wise reduction map
$\Psi_s:ST_n(R)\to ST_n(R/\gamma^sR)$  is surjective group homomorphism with 
$|\ker(\Psi_s)|=|\gamma^sR|^{\,2n-1}=q^{(e-s)(2n-1)}$.  Moreover, 
$\det(A)\in\gamma^sR$  if and only if $\det(\Psi_s(A))=0\in R/\gamma^sR$. Therefore,
\[
\bigl|\{A\in ST_n(R):\det(A)\in\gamma^sR\}\bigr|
=|\ker(\Psi_s)|\cdot\bigl|ST_n(R/\gamma^sR,0)\bigr|
=q^{(e-s)(2n-1)}\bigl|ST_n(R/\gamma^sR,0)\bigr|.
\]
The first statement is proved.

For the second part, the punctured layer
\[
\{A:\det(A)\in\gamma^sU(R)\}
=\{A:\det(A)\in\gamma^sR\}\setminus\{A:\det(A)\in\gamma^{s+1}R\}.
\]
  yields
\[
q^{(e-s)(2n-1)}\bigl|ST_n(R/\gamma^sR,0)\bigr|
-q^{(e-s-1)(2n-1)}\bigl|ST_n(R/\gamma^{s+1}R,0)\bigr|
\]
as desired.
\end{proof}

\subsubsection{Explicit formula for $\bigl|ST_n(R,\gamma^s)\bigr|$} 

Here, we present the enumeration formula for $\bigl|ST_n(R,\gamma^s b )\bigr|$, where $b\in Q(R)$ and $s\in \{1, 2,\dot,e-1\}$.  By Theorem~\ref{lem:sq-class-inv},  $\bigl|ST_n(R,\gamma^s b )\bigr|=\bigl|ST_n(R,\gamma^s )\bigr|$ which is  given in the following theorem.   

\begin{theorem}\label{thm:exact-gammas}
Let $R$ be a CFCR with maximal ideal $\gamma R$, nilpotency index $e$,  and  residue field
$\mathbb{F}_q$.  Let $n$ be a positive integer and let  $s\in \{1, 2,\dots,e-1\}$. Then 
\begin{equation}\label{eq:gamma-s-count}
\bigl|ST_n(R,\gamma^s)\bigr|
=\frac{q^{\,2(e-s-1)(n-1)}}{q-1}
\Bigl(q^{\,2n-1}\bigl|ST_n(R/\gamma^sR,0)\bigr|
      -\bigl|ST_n(R/\gamma^{s+1}R,0)\bigr|\Bigr).
\end{equation}
\end{theorem}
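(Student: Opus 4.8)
The plan is to reduce everything to the \emph{punctured layer} count already established in Theorem~\ref{thm:ideal-and-punctured-layer} and to the single structural fact that, for $1\le s\le e-1$, the number $|ST_n(R,\gamma^s u)|$ is the \emph{same} for every unit $u$, irrespective of its square class. Granting this uniformity, since $\gamma^sU(R)=\gamma^sR\setminus\gamma^{s+1}R$ has exactly $(q-1)q^{e-s-1}$ elements and the fibres $\{A:\det A=c\}$ for $c\in\gamma^sU(R)$ then all have cardinality $|ST_n(R,\gamma^s)|$, I would simply divide the count of Theorem~\ref{thm:ideal-and-punctured-layer}(2) by $(q-1)q^{e-s-1}$. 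Factoring $q^{(e-s-1)(2n-1)}$ out of that count and cancelling $q^{e-s-1}$ produces the prefactor $q^{2(e-s-1)(n-1)}/(q-1)$ together with the bracket $q^{2n-1}|ST_n(R/\gamma^sR,0)|-|ST_n(R/\gamma^{s+1}R,0)|$, which is exactly \eqref{eq:gamma-s-count}. Thus the whole theorem rests on the equality $|ST_n(R,\gamma^s)|=|ST_n(R,\gamma^s b)|$ for $b\in N(R)$; equivalently, on the vanishing of the signed sum $S_n^{(s)}(R):=\sum_A\chi(\overline u)$, taken over all $A$ with $\det A=\gamma^s u\in\gamma^sU(R)$, where $\overline u:=\psi(u)\in\mathbb{F}_q^{*}$ is well defined because any two admissible $u$ agree modulo $\gamma^{e-s}\subseteq\gamma R$ (for even $q$ there is a single square class by Lemma~\ref{lem:SUR}, so nothing is to prove).

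For \emph{odd} $n$ this equality is immediate and bijective. Since $n$ is odd, $\gcd(n,q-1)$ is odd, so the subgroup of $n$-th powers of $\mathbb{F}_q^{*}$ is not contained in the squares; hence there is $\lambda\in U(R)$ with $\psi(\lambda)^n\in N(\mathbb{F}_q)$, i.e. $\lambda^n\in N(R)$ by Lemma~\ref{lem:SUR}. Scalar multiplication $A\mapsto\lambda A$ is a bijection of $ST_n(R)$ onto itself preserving the symmetric tridiagonal shape, and $\det(\lambda A)=\lambda^n\det A$; therefore it carries $ST_n(R,\gamma^s)$ bijectively onto $ST_n(R,\gamma^s\lambda^n)$, and since $\gamma^s\lambda^n\in\gamma^sN(R)$ the claimed equality follows from Theorem~\ref{lem:sq-class-inv}.

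The even case is the heart of the matter and I expect it to be the main obstacle, precisely because for even $n$ every structure-preserving diagonal or scalar operation multiplies the determinant by a \emph{square}, so no such bijection can flip the square class. My plan is a character-sum argument in the spirit of Lemma~\ref{lem:2.6} and Lemma~\ref{lem:lift-Sn}. First, exactly as in Lemma~\ref{lem:lift-Sn}, entry-wise reduction modulo $\gamma^{s+1}$ is $q^{(e-s-1)(2n-1)}$-to-one and respects the weight, so $S_n^{(s)}(R)=q^{(e-s-1)(2n-1)}S_n^{(s)}(\bar R)$ with $\bar R:=R/\gamma^{s+1}R$; it therefore suffices to prove $S_n^{(s)}(\bar R)=0$. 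In $\bar R$ the ideal $\gamma^s\bar R$ is the one-dimensional socle, isomorphic to $\mathbb{F}_q$, and the weight $\overline u\mapsto\chi(\overline u)$ is precisely the quadratic character on this socle. I would then run the recurrence \eqref{eq:det}, conditioning on the leading principal minor $\det A_{n-1}$: whenever $\det A_{n-1}\in U(\bar R)$ the corner entry $x_n$ makes $\det A_n=x_n\det A_{n-1}-r_{n-1}^2\det A_{n-2}$ range bijectively over $\bar R$, so the socle values are each attained once and the inner sum equals $\sum_{a\in\mathbb{F}_q}\chi(a)=0$, killing the entire generic part. The remaining configurations, where $\det A_{n-1}$ is a non-unit, are handled by peeling a second row and reducing to $S_{n-2}^{(s)}(\bar R)$; the base cases $S_1^{(s)}(\bar R)=0$ and $S_2^{(s)}(\bar R)=0$ hold because forcing $\det A$ into the socle forces the off-diagonal entry $r$ into $\gamma\bar R$, after which the socle coordinate appears as a product $\overline{x}_1\overline{x}_2$ and the quadratic character sums to zero along the free line. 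Hence $S_n^{(s)}(\bar R)=0$.

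Combining the two parities yields $|ST_n(R,\gamma^s)|=|ST_n(R,\gamma^s b)|$ for all $1\le s\le e-1$, and the division of the punctured-layer count described in the first paragraph gives \eqref{eq:gamma-s-count}. The delicate point, and the step I would write most carefully, is the even-$n$ bookkeeping of the non-unit-minor configurations modulo $\gamma^{s+1}$, where one must verify that the residual sums collapse to the vanishing base cases uniformly in $s$.
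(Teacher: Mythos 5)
Your opening reduction is correct and is arithmetically consistent with the theorem: the punctured layer $\gamma^sU(R)=\gamma^sR\setminus\gamma^{s+1}R$ has exactly $(q-1)q^{e-s-1}$ elements, and dividing the count of Theorem~\ref{thm:ideal-and-punctured-layer}(2) by this number yields precisely \eqref{eq:gamma-s-count}. Your odd-$n$ argument (scalar multiplication by $\lambda$ with $\lambda^n\in N(R)$, then Theorem~\ref{lem:sq-class-inv}) is sound. This is a genuinely different route from the paper, which never divides the layer uniformly over all units: it instead works through $\bar R=R/\gamma^{s+1}R$ in two steps, first expressing $|ST_n(\bar R,\gamma^s+\gamma^{s+1}R)|$ via zero-determinant counts on $R/\gamma^sR$ and $\bar R$ by counting preimages under the one-step reduction, and then lifting from $\bar R$ to $R$ by analysing how the unit factor of the determinant varies over the fibre. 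So the two proofs place the burden in different places, but both ultimately hinge on a uniformity statement across the $q-1$ nonzero socle values.

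The even-$n$, odd-$q$ case of your uniformity claim is a genuine gap, and you are right that it is the heart of the matter. Two concrete problems. First, your justification of the base case $S_2^{(s)}(\bar R)=0$ is false: $\det A_2=x_1x_2-r^2\in\gamma^s\bar R\setminus\{0\}$ does \emph{not} force $r\in\gamma\bar R$ (take $x_1=1$, $x_2=1+\gamma^s$, $r=1$), so the "free line with vanishing character sum" you invoke is not there and the base case needs a different computation. Second, the inductive step "peeling a second row and reducing to $S_{n-2}^{(s)}(\bar R)$" does not go through as cleanly as over a field. The sum over $x_n$ does kill every configuration in which $\det A_{n-1}$ is nonzero (unit or not), since $x_n\det A_{n-1}-r_{n-1}^2\det A_{n-2}$ then sweeps a coset of $\gamma^t\bar R$ with $t\le s$ that either covers the punctured socle with uniform multiplicity or misses it entirely; but in the surviving case $\det A_{n-1}=0$ the condition $-r_{n-1}^2\det A_{n-2}\in\gamma^s U(\bar R)$ couples the $\gamma$-valuations of $r_{n-1}$ and of $\det A_{n-2}$, so one is led not to $S_{n-2}^{(s)}(\bar R)$ alone but to a sum of terms $S_{n-2}^{(s-2j)}(\bar R)$ over the possible valuations $j$ of $r_{n-1}$, weighted by the number of completions $(x_{n-1},r_{n-2})$ forcing $\det A_{n-1}=0$ --- a count which itself depends on the valuation of $\det A_{n-2}$. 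None of this bookkeeping is carried out, and it does not visibly telescope to the base case. Note finally that the uniformity cannot be expected "for free'': for $s=0$ and even $n$ it is false (that is exactly $S_n(R)\neq 0$ in Corollary~\ref{cor:SnR-qmod4}), so any correct argument must exploit $s\ge 1$ throughout, not only in the generic sub-case where you currently use it.
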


\begin{proof}First, consider the entry-wise reduction
$\mu:ST_n(R/\gamma^{s+1}R)\to ST_n(R/\gamma^{s}R)$.
We note the the kernel  of $\mu$ consists  of symmetric tridiagonal matrices with all entries in $\gamma^sR/\gamma^{s+1}R$ which implies that  $|\ker(\mu)|=q^{\,2n-1}$.
Counting the pre-images of the zero-determinants $ST_n(R/\gamma^sR,0)$,  we have 
\[
q^{\,2n-1}\bigl|ST_n(R/\gamma^sR,0)\bigr|
=\bigl|ST_n(R/\gamma^{s+1}R,0)\bigr|
+(q-1)\,\bigl|ST_n(R/\gamma^{s+1}R,\gamma^s+\gamma^{s+1}R)\bigr|.
\]
Rearranging, it follows that 
\begin{equation}\label{eq:quotient-step}
\bigl|ST_n(R/\gamma^{s+1}R,\gamma^s+\gamma^{s+1}R)\bigr|
=\frac{1}{q-1}\Bigl(q^{\,2n-1}\bigl|ST_n(R/\gamma^sR,0)\bigr|
                   -\bigl|ST_n(R/\gamma^{s+1}R,0)\bigr|\Bigr).
\end{equation}
We note that 
 entry-wise reduction
$\Psi_{s+1}:ST_n(R)\to ST_n(R/\gamma^{s+1}R)$  is a surjective group homomorphism with  $|\ker(\Psi_{s+1})|=|\gamma^{s+1}R|^{\,2n-1}
= q^{(e-s-1)(2n-1)}$.  Then the restriction map \[\Psi_{s+1}\big\vert_{ST_n(R,\gamma^s)}:   ST_n(R,\gamma^s) \to ST_n(R/\gamma^{s+1}R,\gamma^s+\gamma^{s+1}R)\] is surjective. 
  For a fixed
$
B\in ST_n(R/\gamma^{s+1}R,\ \gamma^s+\gamma^{s+1}R)$, every lift $A \in ST_n(R) $ of $B$ has determinant of the form
$
\det(A)=\gamma^s u$,  
where $u\in    1+\gamma ( R/ \gamma^{e-s-1} R)$ as embedded in $U(R)$.   We note that $\det(A)=\gamma^s$ if and only if  $u= 1$. As the $2n-1$ entries in the
kernel $\gamma^{s+1}R$ are arbitrary, the resulting unit factor $u$ runs {uniformly} over
 $1+\gamma ( R/ \gamma^{e-s-1} R)$   whose cardinality is $q^{\,e-s-1}$. Therefore, among the $|\ker(\Psi_{s+1})|=q^{(e-s-1)(2n-1)}$ lifts of $B$, exactly a
proportion $1/q^{\,e-s-1}$ achieve $u=1$,  or equivalently,  $\det(A)=\gamma^s$.
Hence, the number of
lifts of $B$ with {exact} determinant $\gamma^s$  is
$|\ker(\Psi_{s+1})|/q^{\,e-s-1}
= q^{(e-s-1)(2n-1)-(e-s-1)}=q^{\,2(e-s-1)(n-1)}$.
In particular, the restriction
$
\Psi_{s+1}\big\vert_{ST_n(R,\gamma^s)}$
is a $q^{\,2(e-s-1)(n-1)}$-to-one surjective map.  Consequently, 
\begin{equation}\label{eq:lift-factor}
\bigl|ST_n(R,\gamma^s)\bigr|
= q^{\,2(e-s-1)(n-1)}\ \bigl|ST_n(R/\gamma^{s+1}R,\gamma^s+\gamma^{s+1}R)\bigr|.
\end{equation}
Combining \eqref{eq:quotient-step} and \eqref{eq:lift-factor}, it yields the result\eqref{eq:gamma-s-count}.
\end{proof}
In the case where  $q$ is even,  we have $U(R)=Q(R)$ by Lemma \ref{lem:SUR}. Hence,   
 
\[\bigl|ST_n(R,\gamma^s a)\bigr|= \bigl|ST_n(R,\gamma^s )\bigr| \]
for all $a\in U(R)$ as given in Theorem  \ref{thm:exact-gammas}.

\subsubsection{Explicit formula  for $\bigl|ST_n(R,\gamma^s b)\bigr|$ with $b\in N(R)$}

Here, we focus on  the number $\bigl|ST_n(R,\gamma^s b)\bigr|$ in the case where $b\in N(R)$ which actually occurs only odd prime power $q$. In this case, $\bigl|ST_n(R,\gamma^s b)\bigr|$ shares the value among the elements $b\in N(R)$ by  Theorem~\ref{lem:sq-class-inv}.

\begin{theorem}\label{thm:nonsq} Let $q$ be an odd prime power and let $n$ be a positive integer. 
Let $R$ be a CFCR with maximal ideal $\gamma R$, nilpotency index $e$, and  residue field
$\mathbb{F}_q$.  Let  $u \in N(R)$ and let  $s\in \{1, 2,\dots,e-1\}$. Then 
 
\begin{align*}  
\bigl|ST_n(R,u\,\gamma^s)\bigr|
=&\frac{2}{(q-1)\,q^{\,e-1}}
\Bigl(q^{(e-s)(2n-1)}\bigl|ST_n(R/\gamma^sR,0)\bigr|
     -q^{(e-s-1)(2n-1)}\bigl|ST_n(R/\gamma^{s+1}R,0)\bigr|\Bigr)\\&
 -\bigl|ST_n(R,\gamma^s)\bigr|,
\end{align*}
where $\bigl|ST_n(R,\gamma^s)\bigr|$ is given by \eqref{eq:gamma-s-count}. 
\end{theorem}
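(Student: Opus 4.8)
The plan is to extract $|ST_n(R,u\gamma^s)|$ from the total size of the punctured layer $\gamma^sU(R)=\gamma^sR\setminus\gamma^{s+1}R$, a quantity already evaluated in closed form in Theorem~\ref{thm:ideal-and-punctured-layer}(2). The organising idea is that this layer splits into two parts according to the square class of the unit factor of the determinant, and that $|ST_n(R,\cdot)|$ is constant on each part. Concretely, every $c\in\gamma^sU(R)$ can be written $c=\gamma^s a$ with $a\in U(R)$, the representative $a$ being determined only modulo the annihilator $\mathrm{Ann}(\gamma^s)=\gamma^{e-s}R$. Because $1\le s\le e-1$ forces $\gamma^{e-s}R\subseteq\gamma R$, Lemma~\ref{lem:SUR}(2) shows that membership of $a$ in $Q(R)$ or $N(R)$ depends only on $\psi(a)$ and is therefore independent of the chosen representative; hence each layer element $c$ carries a well-defined square class. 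Theorem~\ref{lem:sq-class-inv} (for this $s$) then gives $|ST_n(R,c)|=|ST_n(R,\gamma^s)|$ on the residue part and $|ST_n(R,c)|=|ST_n(R,u\gamma^s)|$ on the nonresidue part.

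First I would pin down the common size $M$ of the two parts. The map $a\mapsto\gamma^s a$ sends $U(R)$ onto $\gamma^sU(R)$ with fibres equal to the cosets $a+\gamma^{e-s}R$, each of cardinality $|\gamma^{e-s}R|=q^{s}$, and by the invariance just noted every such fibre lies inside a single square class. Since $|Q(R)|=|N(R)|=\tfrac{q-1}{2}q^{\,e-1}$ by Lemma~\ref{lem:SUR}(4), the residues and the nonresidues are each partitioned into fibres of size $q^{s}$ lying over a single part, whence
\[
M=\frac{|Q(R)|}{q^{s}}=\frac{q-1}{2}\,q^{\,e-s-1}
\]
and the two parts have equal size (equivalently, multiplication by $u$ is a bijection between them).

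Next I would assemble the accounting identity. Grouping $\sum_{c\in\gamma^sU(R)}|ST_n(R,c)|$ by square class yields
\[
\bigl|\{A\in ST_n(R):\det(A)\in\gamma^sU(R)\}\bigr|
=M\bigl(|ST_n(R,\gamma^s)|+|ST_n(R,u\gamma^s)|\bigr),
\]
in which the left-hand side is the explicit expression $q^{(e-s)(2n-1)}\bigl|ST_n(R/\gamma^sR,0)\bigr|-q^{(e-s-1)(2n-1)}\bigl|ST_n(R/\gamma^{s+1}R,0)\bigr|$ of Theorem~\ref{thm:ideal-and-punctured-layer}(2). Solving this single linear relation for the nonresidue unknown and inserting the already-known value of $|ST_n(R,\gamma^s)|$ from \eqref{eq:gamma-s-count} then yields the closed-form expression for $|ST_n(R,u\gamma^s)|$; its independence of the particular choice of $u\in N(R)$ is exactly the content of Theorem~\ref{lem:sq-class-inv}.

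The step I expect to be the main obstacle is the class-size bookkeeping, since it is precisely what fixes the power of $q$ in the denominator of the final formula. Two points demand care: that the square class is a genuine invariant of the layer element $c$ rather than of an arbitrarily chosen representative $a$ (resting on $\gamma^{e-s}R\subseteq\gamma R$ together with Lemma~\ref{lem:SUR}(2)), and that the fibres of $a\mapsto\gamma^s a$ are all of the uniform size $q^{s}$ and never straddle two square classes. Once $M$ has been correctly identified, the rest is a direct substitution of the quantities supplied by Theorems~\ref{thm:ideal-and-punctured-layer} and~\ref{thm:exact-gammas}.
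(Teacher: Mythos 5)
Your strategy is the same as the paper's: split the punctured layer $\{A\in ST_n(R):\det(A)\in\gamma^sU(R)\}$, whose size is supplied by Theorem~\ref{thm:ideal-and-punctured-layer}(2), into the two square classes of the unit factor, invoke Theorem~\ref{lem:sq-class-inv} to see that $|ST_n(R,\cdot)|$ is constant on each part, and solve the resulting linear relation using \eqref{eq:gamma-s-count}. The only difference is the multiplicity $M$. The paper writes
\[
\bigl|\{A:\det(A)\in\gamma^sU(R)\}\bigr|=|Q(R)|\,\bigl|ST_n(R,\gamma^s)\bigr|+|N(R)|\,\bigl|ST_n(R,u\gamma^s)\bigr|,
\]
treating the sets $ST_n(R,a\gamma^s)$ for $a\in U(R)$ as pairwise distinct, which gives $M=\tfrac{q-1}{2}q^{e-1}$ and hence the denominator $(q-1)q^{e-1}$ in the stated formula. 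You instead note that $\gamma^sa=\gamma^sa'$ precisely when $a-a'\in\mathrm{Ann}(\gamma^s)=\gamma^{e-s}R$, so for $1\le s\le e-1$ each determinant value in the layer is hit by a coset of $q^{s}$ units lying in a single square class, and the number of distinct values per class is $|Q(R)|/q^{s}=\tfrac{q-1}{2}q^{e-s-1}$.

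Your bookkeeping is the correct one; the paper's identity overcounts each determinant value $q^{s}$ times, and the theorem as printed is false. Concretely, take $R=\mathbb{Z}_9$ (so $q=3$, $e=2$, $\gamma=3$), $n=1$, $s=1$, $u=2$: here $|ST_1(R,a)|=1$ for every $a$, the punctured layer is $\{3,6\}$ of size $2$ (consistent with Theorem~\ref{thm:ideal-and-punctured-layer}(2), since $3\cdot 1-1\cdot 1=2$), yet the stated formula returns $\tfrac{2}{(3-1)\cdot 3}\cdot 2-1=-\tfrac{1}{3}$ instead of $|ST_1(\mathbb{Z}_9,6)|=1$, whereas your denominator $(q-1)q^{e-s-1}=2$ returns $\tfrac{2}{2}\cdot 2-1=1$. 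So your argument does not prove the statement as written; it proves the corrected version with $(q-1)q^{e-s-1}$ in place of $(q-1)q^{e-1}$, and the discrepancy lies with the paper rather than with your proof. The same overcounting appears in \eqref{eq:layer-totaaal} in the proof of Theorem~\ref{thm:zero-fiber}, so the correction should be propagated there as well.
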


\begin{proof}   
  From Theorem \ref{thm:ideal-and-punctured-layer}, it follows that 
\begin{align}
\bigl|\{A:\ \det(A)\in\gamma^sR\setminus\gamma^{s+1}R\}\bigr|&=\bigl|\{\,A\in ST_n(R)\;:\;\det(A)\in \gamma^s U(R)\,\}\bigr|\notag \\ &
=q^{(e-s)(2n-1)}\bigl|ST_n(R/\gamma^sR,0)\bigr|
 -q^{(e-s-1)(2n-1)}\bigl|ST_n(R/\gamma^{s+1}R,0)\bigr|. \label{eq:layer-total}
\end{align}
This layer decomposes as a disjoint union of the fibers $\{ST_n(R,a\gamma^s):a\in U(R)\}$. Since $q$ is odd,  we have 
$|Q(R)|=|N(R)|=\frac{q-1}{2}q^{\,e-1}$ by Lemma \ref{lem:SUR}.
Alternatively, we have 
\begin{align*}
\bigl|\{A:\ \det(A)\in\gamma^sR\setminus\gamma^{s+1}R\}\bigr|&=
|Q(R)|\cdot \bigl|ST_n(R,\gamma^s)\bigr|
    +|N(R)|\cdot \bigl|ST_n(R,u\gamma^s)\bigr|\\
    &
=\frac{(q-1)q^{\,e-1}}{2}\Bigl(\bigl|ST_n(R,\gamma^s)\bigr|+\bigl|ST_n(R,u\gamma^s)\bigr|\Bigr).
\end{align*}
Rearranging,  
\[
\bigl|ST_n(R,u\gamma^s)\bigr|
=\frac{2\bigl|\{A:\ \det(A)\in\gamma^sR\setminus\gamma^{s+1}R\}\bigr|}{(q-1)\,q^{\,e-1}}-\bigl|ST_n(R,\gamma^s)\bigr|.
\]
Together with   \eqref{eq:layer-total}  and \eqref{eq:gamma-s-count}, the result follows. 
\end{proof}

\subsubsection{The Zero-Determinant   $\bigl|ST_n(R,0)\bigr|$}

The resulting identity expresses $\bigl|ST_n(R,0)\bigr|$ as the total
number of symmetric tridiagonal matrices minus the invertible ones and minus the
contributions of all punctured layers. Each punctured layer is written
as a difference of zero–determinant counts on adjacent quotients
$R/\gamma^sR$ and $R/\gamma^{s+1}R$. In particular, once the numbers
$\bigl|ST_n(R/\gamma^tR,0)\bigr|$ are known (or computed recursively by
the triangular scheme developed earlier), the formula below yields
$\bigl|ST_n(R,0)\bigr|$ outright. Obtaining closed forms for
$\bigl|ST_n(R/\gamma^tR,0)\bigr|$ for general $n,t$ remains  an
open component in this approach.

\begin{theorem}\label{thm:zero-fiber}
Let $R$ be a CFCR with maximal ideal $\gamma R$, nilpotency index $e$, residue field
$\mathbb{F}_q$, and let $n\ge 1$.
Then
\begin{align}
\bigl|ST_n(R,0)\bigr|
=&q^{e(2n-1)}-|IST_n(R)|  \notag\\
&-\sum_{s=1}^{e-1}
\Bigl(q^{(e-s)(2n-1)}\bigl|ST_n(R/\gamma^sR,0)\bigr|
     -q^{(e-s-1)(2n-1)}\bigl|ST_n(R/\gamma^{s+1}R,0)\bigr|\Bigr). \label{eq:zero-by-layers}
\end{align}
\end{theorem}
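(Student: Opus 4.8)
The plan is to partition the entire set $ST_n(R)$ according to the $\gamma$-adic layer in which the determinant lies, and then solve the resulting additive identity for the zero-determinant count $\bigl|ST_n(R,0)\bigr|$. Since $R$ is a CFCR, every element $a\in R$ admits a unique expression $a=\gamma^s b$ with $s\in\{0,1,\dots,e\}$ and $b\in U(R)$, under the convention that $a=0$ precisely when $s=e$. Consequently $R$ decomposes as the disjoint union
\[
R = U(R)\;\sqcup\;\Bigl(\bigsqcup_{s=1}^{e-1}\gamma^s U(R)\Bigr)\;\sqcup\;\{0\},
\]
where $\gamma^s U(R)=\gamma^s R\setminus\gamma^{s+1}R$ is the $s$-th punctured layer. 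Pulling this back through the determinant map partitions $ST_n(R)$ into the invertible matrices $IST_n(R)$ (the layer $s=0$), the punctured-layer fibers $\{A:\det(A)\in\gamma^s U(R)\}$ for $1\le s\le e-1$, and the zero fiber $ST_n(R,0)$ (the layer $s=e$).

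Next I would take cardinalities of this partition. By Proposition~\ref{prop:TnR} together with $|R|=q^e$, the total count is $|ST_n(R)|=|R|^{2n-1}=q^{e(2n-1)}$. The invertible contribution is $|IST_n(R)|$, while each punctured-layer contribution is supplied directly by the second part of Theorem~\ref{thm:ideal-and-punctured-layer}, namely
\[
\bigl|\{A:\det(A)\in\gamma^s U(R)\}\bigr|
= q^{(e-s)(2n-1)}\bigl|ST_n(R/\gamma^s R,0)\bigr|
  - q^{(e-s-1)(2n-1)}\bigl|ST_n(R/\gamma^{s+1}R,0)\bigr|
\]
for each $s\in\{1,\dots,e-1\}$. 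Summing the three types of contributions yields the additive identity
\[
q^{e(2n-1)} = \bigl|ST_n(R,0)\bigr| + |IST_n(R)|
  + \sum_{s=1}^{e-1}\Bigl(q^{(e-s)(2n-1)}\bigl|ST_n(R/\gamma^s R,0)\bigr|
  - q^{(e-s-1)(2n-1)}\bigl|ST_n(R/\gamma^{s+1}R,0)\bigr|\Bigr),
\]
and isolating $\bigl|ST_n(R,0)\bigr|$ gives precisely \eqref{eq:zero-by-layers}.

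Given the machinery already in place, the argument is essentially bookkeeping, so there is no deep obstacle; the only point requiring genuine care is verifying that the layers genuinely partition $R$. In particular I would make sure the two boundary cases are handled correctly: the layer $s=0$ is the unit set $U(R)$ and is accounted for by $|IST_n(R)|$ (matching the ``$s=0$'' specialization in Theorem~\ref{thm:ideal-and-punctured-layer}), while the layer $s=e$ is the singleton $\{0\}$, which is the very fiber being solved for. Once the partition is confirmed, the conclusion follows by a single rearrangement. I would also remark that the formula expresses $\bigl|ST_n(R,0)\bigr|$ purely in terms of the zero-determinant counts $\bigl|ST_n(R/\gamma^t R,0)\bigr|$ on the smaller chain rings $R/\gamma^t R$ (with base case $t=1$ given by the finite-field count of Section~\ref{sec3}), so the identity is genuinely recursive in the nilpotency index rather than fully closed-form.
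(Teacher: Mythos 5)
Your proposal is correct and follows essentially the same route as the paper: both partition $ST_n(R)$ by the $\gamma$-adic layer of the determinant into $IST_n(R)$, the punctured layers for $s=1,\dots,e-1$, and the zero fiber, then substitute the punctured-layer counts from Theorem~\ref{thm:ideal-and-punctured-layer} and rearrange. The only cosmetic difference is that the paper writes each punctured layer as the disjoint union $\bigcup_{a\in U(R)} ST_n(R,a\gamma^s)$ before identifying it with $\{A:\det(A)\in\gamma^s U(R)\}$, which you do directly.
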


\begin{proof} 
The  set  $ST_n(R)$  can be viewed as the following disjoint  union:
\[
ST_n(R)=IST_n(R)\ \dot\cup\ \Bigl(\bigcup_{s=1}^{e-1}\ \bigcup_{a\in U(R)} ST_n(R,a\gamma^s)\Bigr)
\ \dot\cup\ ST_n(R,0).
\]
This implies that 
\begin{align}\label{eq0}
\bigl|ST_n(R,0)\bigr|
=|ST_n(R)|-|IST_n(R)|-\sum_{s=1}^{e-1}\ \sum_{a\in U(R)} \bigl|ST_n(R,a\gamma^s)\bigr|.
\end{align}
From Theorem \ref{thm:ideal-and-punctured-layer}, it follows that 
\begin{align}
\sum_{a\in U(R)} \bigl|ST_n(R,a\gamma^s)\bigr|&=\bigl|\{\,A\in ST_n(R)\;:\;\det(A)\in \gamma^s U(R)\,\}\bigr|\notag \\ &
=q^{(e-s)(2n-1)}\bigl|ST_n(R/\gamma^sR,0)\bigr|
 -q^{(e-s-1)(2n-1)}\bigl|ST_n(R/\gamma^{s+1}R,0)\bigr|. \label{eq:layer-totaaal}
\end{align}
Together with  $|ST_n(R)|=q^{e(2n-1)}$  and \eqref{eq0},  the proof is completed.
\end{proof}

	\section{Conclusion and Remarks}
	\label{sec5}

	This work develops an explicit enumeration of  nonsingular symmetric tridiagonal matrices with prescribed determinant over finite fields and  CFCRs.
	Over finite fields, we  present the  recurrence for tridiagonal determinants to derive and solve a closed linear recurrence for the number of nonsingular symmetric tridiagonal matrices. Using quadratic character techniques, we further analyze how the counts partition by the determinant value: a sharp parity phenomenon emerges—when the dimension is odd, the number of matrices is independent of the chosen nonzero determinant, while in even dimensions the count depends only on the quadratic residue class of the determinant, with a refinement governed by the field’s congruence modulo four. These results yield compact formulas and transparent structural insight into determinant distributions within this classical matrix family.
	
	Over CFCRs, we obtain explicit formulas for the nonsingular case by reducing to the residue field and lifting along the ideal chain. In particular, the enumeration factors cleanly through the  nilpotency index and the size of the residue field.  For singular  symmetric tridiagonal matrices over a CFCR,  the enumeration has been made by sorting them according to the level  of their determinant inside the ideal chain. Each level has two versions: the whole ideal and the same set with its next, smaller ideal removed. A simple reduction map from the ring to its quotients lets us convert counts over the ring into counts of matrices whose determinant is zero over smaller rings. This yields explicit formulas for determinants equal to a fixed power of the maximal ideal. What remains open is a closed form for the zero-determinant enumeration  on each quotient level.


\end{document}